\definecolor{amaranth}{rgb}{0.9, 0.17, 0.31}
\definecolor{bluegray}{rgb}{0.4, 0.6, 0.8}
\newtheorem*{maintheorem*}{Main Theorem}
\newtheorem{theorem}{Theorem}[section]
\newtheorem{proposition}[theorem]{Proposition}
\newtheorem{corollary}[theorem]{Corollary}
\newtheorem{lemma}[theorem]{Lemma}
\newtheorem{definition}{Definition}
\newtheorem*{theorem*}{Theorem}
\newtheorem{remark}[theorem]{Remark}
\newtheorem*{example*}{Example}
\newtheorem*{conjecture*}{Conjecture}
\def\1{\mathbf 1}
\def\0{\mathbf 0}
\def\cP{\mathcal P}
\def\cCP{\mathcal{CP}}
\def\cX{\mathcal X}
\def\cT{\mathcal T}
\def\mod{{\rm mod} }
\def\<{\langle}
\def\>{\rangle}
\newcommand\comment[1]{}
\newcommand*{\shifttext}[2]{
	\settowidth{\@tempdima}{#2}
	\makebox[\@tempdima]{\hspace*{#1}#2}
}
\newcommand\redsout{\bgroup\markoverwith{\textcolor{amaranth}{\rule[0.5ex]{2pt}{0.4pt}}}\ULon}
\def\@fnsymbol#1{\ensuremath{\ifcase#1\or *\or \dagger\or \ddagger\or
		\mathsection\or \mathparagraph\or \|\or **\or \dagger\dagger
		\or \ddagger\ddagger \else\@ctrerr\fi}}
\title{The Perfect Matching Hamiltonian property in Prism and Crossed Prism graphs}
\author{Francesco Colangelo\thanks{Francesco Colangelo: francesco.colangelo@unibas.it\hfill\newline \hspace*{1.4em}
		Universit\`{a} degli Studi della Basilicata, Dipartimento di Scienze di Base e Applicate, Viale dell'Ateneo Lucano 10, 85100 Potenza, Italy}
	\and
	Federico Romaniello\thanks{Federico Romaniello: federico.romaniello@unibas.it\hfill\newline \hspace*{1.4em}
		Universit\`{a} degli Studi della Basilicata, Dipartimento di Innovazione Umanistica, Scientifica e Sociale, Viale dell'Ateneo Lucano 10, 85100 Potenza, Italy}}
\date{}
\begin{document}
\maketitle
\begin{abstract}
A graph $G$ has the \emph{Perfect Matching Hamiltonian property} (or for short, $G$ is $PMH$) if, for each one of its perfect matchings, there is another perfect matching of $G$ such that the union of the two perfect matchings yields a Hamiltonian cycle of $G$.
In this note, we show that \emph{Prism graphs} $\cP_n$ are not $PMH$, except for the $Cube\ graph$, and indicate for which values of $n$ the \emph{Crossed Prism graphs} $\cCP_n$ are $PMH$.\\

\noindent\textbf{Keywords:} Cubic graph, perfect matching, Hamiltonian cycle.\\
\textbf{Math. Subj. Class.:} 05C15, 05C45, 05C70.

\end{abstract}

\section{Introduction}


Let \( G \) be a simple connected graph of even order, with vertex set \( V(G) \) and edge set \( E(G) \). A \emph{\( k \)-factor} of \( G \) is defined as a \( k \)-regular spanning subgraph of \( G \) (not necessarily connected). Two prominent topics in graph theory are \emph{perfect matchings} and \emph{Hamiltonian cycles}. A perfect matching corresponds to the edge set of a 1-factor, while a Hamiltonian cycle represents a connected 2-factor of a graph. For \( t \geq 3 \), a \emph{cycle} of length \( t \) (or a \( t \)-cycle), denoted as \( C_{t} = (v_{1}, \ldots, v_{t}) \), consists of a sequence of mutually distinct vertices \( v_1, v_2, \ldots, v_{t} \) with an associated edge set \(\{v_{1}v_{2}, \ldots, v_{t-1}v_{t}, v_{t}v_{1}\}\). We refer the reader to \cite{Diestel} for any definitions not explicitly defined here.

A graph $G$ admitting a perfect matching is said to have the \emph{Perfect-Matching-Hamiltonian property} (for short the \emph{PMH-property}) if for every perfect matching $M$ of $G$ there exists another perfect matching $N$ of $G$, with $M \cap N = \emptyset$, such that the edges of $M\cup N$ induce a Hamiltonian cycle of $G$. For simplicity, a graph admitting this property is said to be $PMH$. This property was first studied in the 1970s by Las Vergnas \cite{LasVergnas} and H\"aggkvist \cite{Haggkvist}, and for more recent results about the $PMH$-property we suggest the reader to \cite{pmhlinegraphs,papillon,ThomassenEtAl,cpcq,accordions,betwixt}. If a cubic graph $G$ is $PMH$, then every perfect matching of $G$ corresponds to one of the colours of a (proper) $3$-edge-colouring of the graph, and we say that every perfect matching can be extended to a $3$-edge-colouring. This is achieved by alternately colouring the edges of the Hamiltonian cycle containing a predetermined perfect matching using two colours, and then colouring the edges not belonging to the Hamiltonian cycle using a third colour.
However, there are cubic graphs which are not $PMH$ but have every one of their perfect matchings that can be extended to a $3$-edge-colouring. The following proposition, stated in \cite{papillon} characterises all cubic graphs for which every one of their perfect matchings can be extended to a 3-edge-colouring of the graph.

\begin{proposition}[\cite{papillon}]\label{prop e2f}
Let $G$ be a cubic graph admitting a perfect matching. Every perfect matching of $G$ can be extended to a $3$-edge-colouring of $G$ if and only if all $2$-factors of $G$ contain only even cycles.
\end{proposition}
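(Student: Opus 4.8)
The plan is to exploit the duality between perfect matchings and $2$-factors that holds in any cubic graph. First I would record the elementary observation that if $G$ is cubic and $M$ is a perfect matching, then $F := E(G)\setminus M$ is a $2$-regular spanning subgraph, i.e. a $2$-factor; conversely, if $F$ is a $2$-factor then its complement $E(G)\setminus F$ is a $1$-regular spanning subgraph, i.e. a perfect matching. Thus $M \mapsto E(G)\setminus M$ is a bijection between the perfect matchings and the $2$-factors of $G$.

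The second ingredient is to translate the phrase ``$M$ can be extended to a $3$-edge-colouring'' into a condition on the associated $2$-factor $F = E(G)\setminus M$. Extending $M$ to a proper $3$-edge-colouring means partitioning $E(G)$ into three perfect matchings $M, N_1, N_2$; equivalently, it means partitioning the edges of $F$ into two perfect matchings $N_1, N_2$ of $G$. Since $F$ is a disjoint union of cycles, such a partition is exactly a proper $2$-edge-colouring of each of these cycles, and a cycle admits a proper $2$-edge-colouring (the alternating one) if and only if it has even length. Hence $M$ extends to a $3$-edge-colouring if and only if every cycle of $F = E(G)\setminus M$ is even.

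With these two observations the proposition follows by reading the equivalence in both directions. For the ``only if'' direction, take an arbitrary $2$-factor $F$ and let $M = E(G)\setminus F$ be the corresponding perfect matching; by hypothesis $M$ extends to a $3$-edge-colouring, so every cycle of $F$ is even, giving the claim for all $2$-factors. For the ``if'' direction, take any perfect matching $M$; its complement $F = E(G)\setminus M$ is a $2$-factor, which by hypothesis has only even cycles, so again $M$ extends to a $3$-edge-colouring.

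I do not expect a genuine obstacle here: the argument is a clean dictionary between the two combinatorial structures. The only point requiring care is the second observation, namely verifying that a proper $2$-edge-colouring of a $2$-factor is the same data as a splitting of its edge set into two perfect matchings of $G$, and that this is governed entirely by the parity of the cycle lengths; once this is stated precisely, the equivalence is immediate in both directions via the bijection of the first step.
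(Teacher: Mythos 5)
Your argument is correct: the complementation bijection $M \mapsto E(G)\setminus M$ between perfect matchings and $2$-factors of a cubic graph, together with the observation that splitting a disjoint union of cycles into two perfect matchings is possible precisely when every cycle is even, is exactly the standard proof of this equivalence. Note that the paper itself gives no proof here --- the proposition is quoted from the cited reference --- but your argument is the same duality argument that underlies that source, and it is complete; the one step you rightly flag as needing care (that an alternating $2$-colouring of the cycles of the $2$-factor yields genuine perfect matchings of $G$, since each vertex lies on exactly one cycle) is handled correctly.
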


The following corollary follows immediately:
\begin{corollary}\label{cor}
Let $G$ be a cubic graph admitting a perfect matching. If there exists a $2$--factor of $G$ that contains an odd cycle, then $G$ is not PMH. 
\end{corollary}

We will now give informal definitions of both Prism graphs and Crossed Prism graphs. Formal definitions with appropriate labelling on vertices and edges for both families will be given in Section \ref{Pn} and Section \ref{CPn}, respectively.\\
A \emph{prism graph} $\cP_n$ is obtained by connecting two identical cycles with edges between corresponding vertices.  The resulting graph resembles a three-dimensional prism, hence the name. A \emph{crossed prism graph} $\cCP_n$ is a variation of the prism graph. Still, it consists of two cycles of equal length with edges between corresponding edges, and in the \textit{outer cycle} edges connect consecutive vertices. In contrast, the \textit{inner cycle} is twisted, meaning that each vertex of the inner cycle connects not directly to the corresponding vertex on the outer cycle, but instead to another vertex in a pattern that creates a crossing structure within the inner cycle. Figure \ref{prismandcprism} shows on the left the Prism graph and on the right the Crossed Prism graph, both on 16 vertices.\\

\begin{figure}[h!]
\centering
\includegraphics[width=1\textwidth]{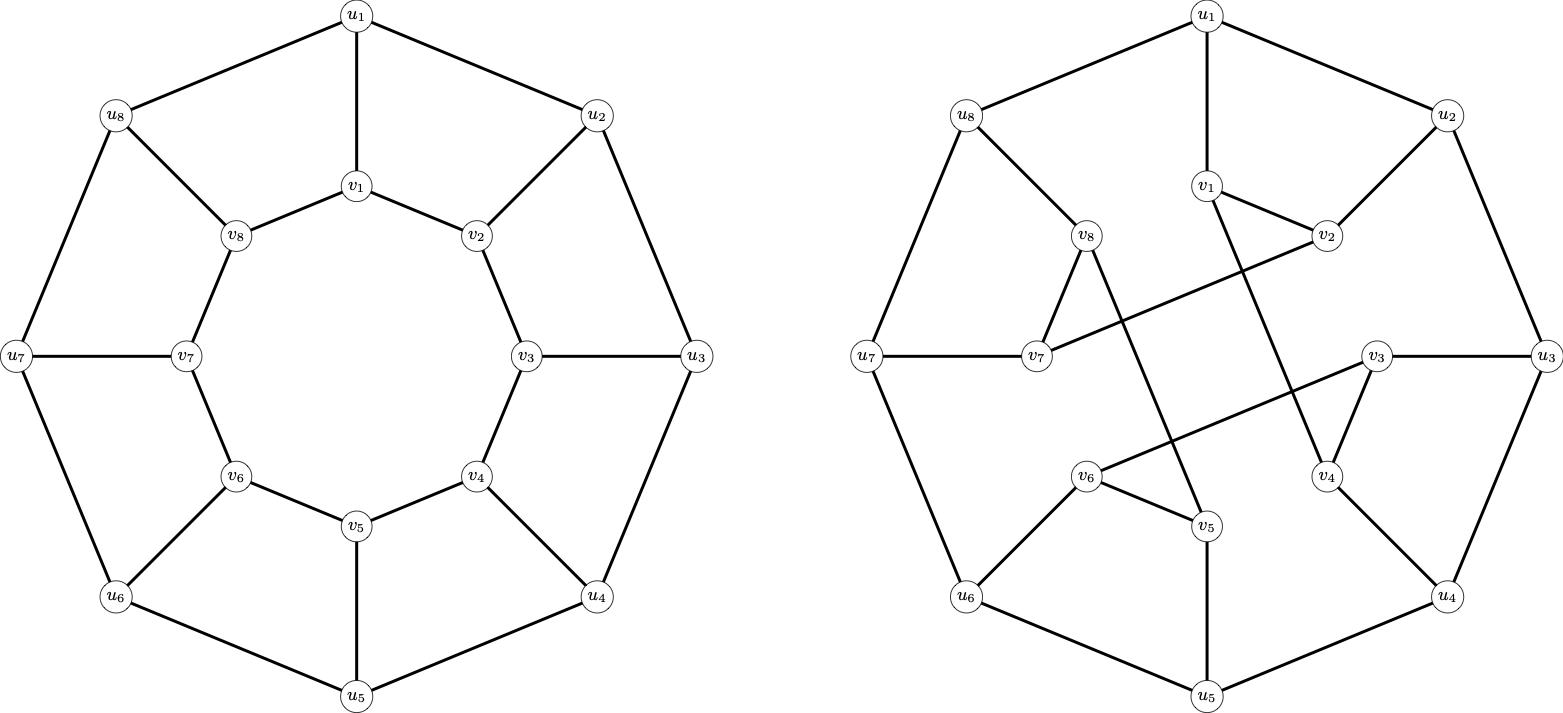}
\caption{The $8$-prism graph $\cP_8$ and the $4$-crossed prism graph $\cCP_2$, both on 16 vertices.}
\label{prismandcprism}
\end{figure}

In Section 1, we show that the only $PMH$ \emph{n-prism graph} $\cP_n$ is the Cube graph, i.e. $\cP_4$. In Section 2, we prove that the \emph{$2$n-crossed prism graph} $\cCP_n$ is $PMH$ only when $n$ is even.

\section{The PMH-property in Prism graphs}\label{Pn}
As stated in the introduction, this section shall be devoted to studying the $PMH$-property in Prism graphs. We start by formally defining the $n$-prism graph $\cP_n$.

\begin{definition}
    Let $n$ be a positive integer. The  $n$-prism graph $\cP_n$ is the graph on $2n$ vertices such that $V(\cP_n)=\{ u_i, v_i \ |\ i=1,\dots, n \}$, where
\begin{enumerate}
	\item[$(1)$] $(u_1 u_2 \dots u_{n} )$ and $(v_1 v_2 \dots v_{n} )$ are two disjoint cycles of length $n$,
	\item[$(2)$] $u_i$ is adjacent to $v_i$, for each $i=1,\dots,n$.		
\end{enumerate}
\end{definition}

The $n$-cycle induced by the sets of vertices $\{u_{i} \ |\ i=1,\dots, n\}$ is referred to as the \emph{outer-cycle}, whilst the $n$-cycle induced by the vertices $\{v_{i}\ |\ i=1,\dots, n\}$ is referred to as the \emph{inner-cycle}. The edges on these two $n$-cycles are said to be the \emph{outer-edges} and \emph{inner-edges} accordingly, whilst the edges $u_{i}v_{i}$ are referred to as \emph{spokes}.

\begin{theorem}
	Let $n$ be a positive integer. Then, the only PMH $n$-prism graph $\cP_n$ is the Cube graph, i.e. $\cP_4$.
\end{theorem}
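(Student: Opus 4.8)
The plan is to use the standard reformulation that a cubic graph is $PMH$ if and only if every one of its perfect matchings is contained in some Hamiltonian cycle (given a Hamiltonian cycle $H\supseteq M$, the complementary matching $N=H\setminus M$ does the job). Thus I must show that for $n\neq 4$ some perfect matching of $\cP_n$ lies in no Hamiltonian cycle, while for $n=4$ every perfect matching extends. I would first dispose of the odd case: when $n$ is odd the outer and inner cycles together form a $2$-factor consisting of two odd $n$-cycles, so Corollary~\ref{cor} immediately gives that $\cP_n$ is not $PMH$. The cube $\cP_4$ has only nine perfect matchings, falling into a few types under its automorphism group, and for each I would exhibit an explicit completing matching; this is a short finite check organised by symmetry.

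The heart of the argument is the even case $n\ge 6$. Here $\cP_n$ is bipartite, so every $2$-factor has only even cycles and Corollary~\ref{cor} is useless; I must instead produce an explicit non-extendable perfect matching. My candidate is the matching $M$ built from the two spokes $u_1v_1,u_4v_4$ together with the forced matchings of the two arcs between them, namely $u_2u_3,v_2v_3$ on one side and $u_5u_6,u_7u_8,\dots,u_{n-1}u_n$ and $v_5v_6,\dots,v_{n-1}v_n$ on the other (both arcs have even length since $n$ is even, so this is a genuine perfect matching).

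To show $M$ lies in no Hamiltonian cycle, suppose $H$ is any $2$-factor with $M\subseteq H$ and encode it by indicators $s_i,x_i,y_i\in\{0,1\}$ recording whether the spoke $u_iv_i$, the outer edge $u_iu_{i+1}$, or the inner edge $v_iv_{i+1}$ belongs to $H$ (indices mod $n$). The degree-two conditions at $u_i$ and $v_i$ read $s_i+x_{i-1}+x_i=2$ and $s_i+y_{i-1}+y_i=2$, whence $(x_{i-1}-y_{i-1})+(x_i-y_i)=0$; setting $d_i=x_i-y_i$ this forces $d_i=-d_{i-1}$, so $|d_i|$ is constant around the cycle. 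Since $u_2u_3,v_2v_3\in M$ we have $x_2=y_2=1$, i.e. $d_2=0$, and therefore $x_i=y_i$ for every $i$: any such $H$ is invariant under the layer-swapping involution $\sigma\colon u_i\leftrightarrow v_i$.

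The main obstacle, and the crux of the proof, is to turn this symmetry into a contradiction with $H$ being a single cycle. The point is that $\sigma$ is a fixed-point-free involution of $V(\cP_n)$, so if $H$ were a single $2n$-cycle then $\sigma$ would act on it as a fixed-point-free involutory automorphism of $C_{2n}$, i.e. either the antipodal rotation or a reflection fixing two antipodal edges. The rotation fixes no edge, forcing $s_i=0$ for all $i$ and hence making $H$ the disjoint union of the outer and inner cycles, which is disconnected; a reflection fixes exactly two edges, and since $\sigma$ swaps the endpoints of any fixed edge, those two edges must be spokes. Thus a single-cycle $H$ in this symmetric situation contains exactly two spokes. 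As $u_1v_1,u_4v_4\in M\subseteq H$, these two spokes are forced to be $u_1v_1$ and $u_4v_4$, so $s_1=s_4=1$ and $s_i=0$ otherwise. Propagating $x_{i-1}+x_i=2-s_i$ from $x_2=1$ then yields $x_1=x_2=x_3=1$, hence $x_4=1-x_3=0$ from the equation at $i=4$, while the equation at $i=5$ gives $x_4=1$. This contradiction, which is precisely where $n\ge 6$ enters (so that position $5$ is distinct from $1$ and $4$), shows that no Hamiltonian cycle contains $M$, and therefore $\cP_n$ is not $PMH$.
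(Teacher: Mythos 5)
Your proof is correct, and in the decisive case (even $n\geq 6$) it takes a genuinely different route from the paper. Both treatments handle odd $n$ identically via Corollary \ref{cor}, and for $\cP_4$ the paper cites Fink's hypercube theorem \cite{F07} where you propose a routine finite check of the nine perfect matchings of the cube. For even $n\geq 6$, however, the paper constructs a matching beginning with the \emph{adjacent} spokes $u_1v_1,u_2v_2$ and alternating parallel pairs with spokes, which forces a trichotomy according to $n \bmod 6$, and then rules out extendability by chasing forced edges until a $4$-, $6$- or $8$-cycle appears inside the alleged Hamiltonian cycle. Your matching (spokes $u_1v_1,u_4v_4$ plus parallel pairs on the two arcs between them) is uniform in $n$, and your key step is structural rather than case-based: the parallel pair $u_2u_3,v_2v_3$ forces $x_i=y_i$ for all $i$, so any $2$-factor containing $M$ is invariant under the layer-swap involution $\sigma$; a Hamiltonian cycle admitting a fixed-point-free involutory automorphism sees it either as the antipodal rotation (no fixed edge, hence no spokes, contradicting $u_1v_1\in H$) or as an edge-reflection (exactly two fixed edges, hence exactly two spokes, necessarily $u_1v_1$ and $u_4v_4$, after which the degree equations give $x_4=0$ at $i=4$ and $x_4=1$ at $i=5$). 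I checked the details: the classification of fixed-point-free involutions of an even cycle is correct, spokes are precisely the $\sigma$-fixed edges, the propagation is right, and the argument transparently fails at $n=4$ exactly because position $5$ wraps to position $1$ — which matches the fact that your $M$ does extend in the cube. What each approach buys: the paper's argument is entirely elementary and local, needing nothing beyond edge-chasing, but pays for it with the mod-$6$ case analysis; yours trades that for a short symmetry argument that isolates a reusable principle (a parallel pair forces $\sigma$-symmetry, and a $\sigma$-symmetric Hamiltonian cycle contains exactly $0$ or $2$ spokes) and explains, rather than merely verifies, why $\cP_4$ is the unique exception. The only loose ends on your side are the asserted-but-standard classification of involutions of $C_{2n}$ and the sketched finite check for the cube, both routine to complete.
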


\begin{proof}
We start by noticing that when $n$ is odd, the inner and the outer cycle of the graph have odd length $n$, and so there is a 2-factor of $\cP_n$ that contains an odd cycle, hence for Corollary \ref{cor} it cannot be PMH.\\
If $n=4$, the Graph $\cP_4$ is the \textit{Cube Graph} $Q_3$, and it was shown in \cite{F07}  that it is \textit{PMH}.\\
Thus, we may suppose $n$ even, $n \geq 6$ and consider the following perfect matching, say $M$, of $\cP_n$ built as follows: we take the edges $u_1v_1,u_2v_2$ and then we alternate, as far as possible, a pair of parallel edges and a spoke. We have the following 3 different possibilities, depending on the parity of $n$ modulo 6:
\begin{itemize}
	\item[1)] if $n \equiv 0 \ (\mod \ 6)$: there will be 4 consecutive spokes, that are $u_1v_1, u_2v_2,$ $ u_nv_n$ and $u_{n-1}v_{n-1}$,
	\item[2)] if $n \equiv 2 \ (\mod \ 6)$: there will be 3 consecutive spokes, that are $u_1v_1, u_2v_2$ and $u_nv_n$,
	\item[3)] if $n \equiv 4 \ (\mod \ 6)$: there will be 2 consecutive spokes, that are $u_1v_1$ and $u_2v_2$.
\end{itemize}

We will show below that the perfect matching $M$ of $\cP_n$, when $n$ even and $n \geq 6$, defined as above cannot be extended to a Hamiltonian cycle.\\
In fact, suppose on the contrary that $M$ can be extended to a Hamiltonian Cycle $H$. The edges $u_1v_1, u_2v_2 \in M$, so they also belong to $H$. 
Let $u_1u_2 \in E(H)$, then $u_3v_3 \in E(H)$ and also $v_1v_2 \in E(H)$, but $(u_1v_1v_2u_2u_1)$ is a $4$-cycle in $H$.\\
Now suppose $u_1u_n \in E(H)$. We have 3 possible cases:
\begin{enumerate}
	\item[(1)]  $n \equiv 0 \ (\mod \ 6)$: then $u_1v_1, u_2v_2, u_nv_n,u_{n-1}v_{n-1} \in M$.
	If $v_1v_2 \in E(H)$ then $v_3u_3 \in E(H)$. But this forces $u_1u_2 \in E(H)$, a contradiction. So $v_1v_n \in E(H)$, but this forces the $4$-cycle $(v_nv_1u_1u_nv_n)$ to be in $H$, a contradiction.
	\item[(2)] $n \equiv 2 \ (\mod \ 6)$: then  $u_1v_1, u_2v_2,u_nv_n \in M$. Since $u_1u_n \in E(H)$, then $u_{n-1}v_{n-1} \in E(H)$. But this forces the $4$-cycle $(u_1v_1v_nu_nu_1)$ to be in $H$, a contradiction.
	\item[(3)] $n \equiv 4 \ (\mod \ 6)$: then $u_1v_1, u_2v_2 \in M$. If $v_1v_2 \in E(H)$ then $v_3u_3 \in E(H)$. But this forces $u_1u_2 \in E(H)$, a contradiction. So $v_1v_n \in E(H)$. If $u_{n-1}v_{n-1} \in E(H)$, this forces the $6-$cycle $(u_1v_1v_nv_{n-1}u_{n-1}u_nu_1)$ to be in $H$, a contradiction. Thus $v_{n-1}v_{n-2}$ and $u_{n-1}u_{n-2} \in E(H)$, but this forces the $8$-cycle  $(u_1v_1v_nv_{n-1}v_{n-2}u_{n-2}u_{n-1}u_nu_1)$ to be in $H$, a contradiction.

\end{enumerate}

Therefore, $M$ belongs to no Hamiltonian Cycles in $\cP_n$ when $n\geq 6$. 
\end{proof}

\begin{figure}[h!]
\centering
\includegraphics[width=1\textwidth]{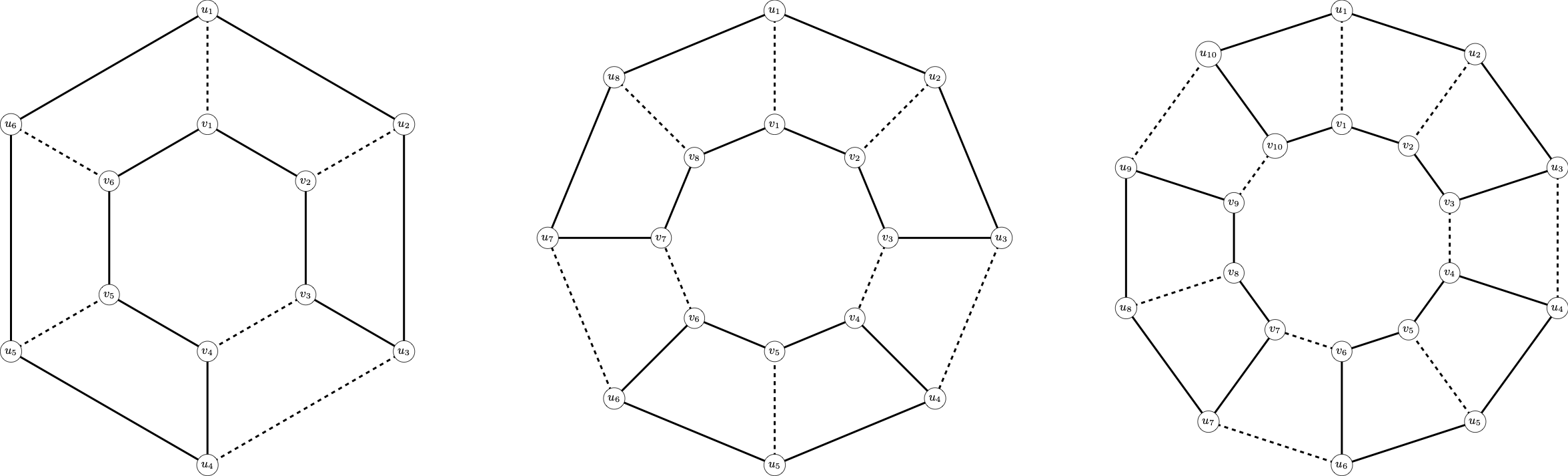}
\caption{The perfect matching $M$, bold dashed edges, in $\cP_6$ (left), in $\cP_8$ (center), and in $\cP_{10}$ (right).}
\label{figure 2}
\end{figure}

\newpage

\section{The PMH-property in Crossed prism graphs}\label{CPn}
As mentioned in the introduction, this section will focus on studying the $PMH$-property in Crossed Prism graphs. As above, we first formally define the $2n$-crossed prism graph $\cCP_n$.

\begin{definition}
    Let $n$ be a positive integer. The  $2n$-crossed prism graph $\cCP_n$ is the graph on $8n$ vertices such that $V(\cCP_n)=\{ u_i, v_i \ |\ i=1,\dots, 4n \}$, where
\begin{enumerate}
	\item[$(1)$] $(u_1 u_2 \dots u_{4n} )$  is a cycle of length $4n$,
	\item[$(2)$] $u_i$ is adjacent to $v_i$, for each $i=1,\dots,4n$,
	\item[$(3)$] the adjacencies between the vertices $v_i$, for $i=1,\dots,4n$, form a cycle of length $4n$ given by the edge set 
	\[
	\{v_{2i-1}v_{2i} \ | \ i=1,\dots,2n \}\cup \{v_{2i-1}v_{2i+2} \ | \ i=1,\dots,2n-1 \} \cup \{v_{4n-1}v_2\}.
	\]
\end{enumerate}
\end{definition}

The $4n$-cycle induced by the sets of vertices $\{u_{i} \ |\ i=1,\dots, 4n\}$ is referred to as the \emph{outer-cycle}, whilst the $4n$-cycle induced by the vertices $\{v_{i}\ |\ i=1,\dots, 4n\}$ is referred to as the \emph{inner-cycle}. The edges on these two $4n$-cycles are said to be the \emph{outer-edges} and \emph{inner-edges} accordingly, whilst the edges $u_{i}v_{i}$ are referred to as \emph{spokes}. The edges $u_{1}u_{4n}$, $v_{2}v_{4n-1}$, $v_{2n-1}v_{2n+2}$, $u_{2n}u_{2n+1}$ are denoted by $a,b,c,d$, respectively, and we shall also denote the set $\{a,b,c,d\}$ by $\mathcal{X}$. The set $\mathcal{X}$ is referred to as the \emph{principal $4$-edge-cut} of $\cCP_n$ (see Figure \ref{figure 1}).

\begin{figure}[h!]
\centering
\includegraphics[width=1\textwidth]{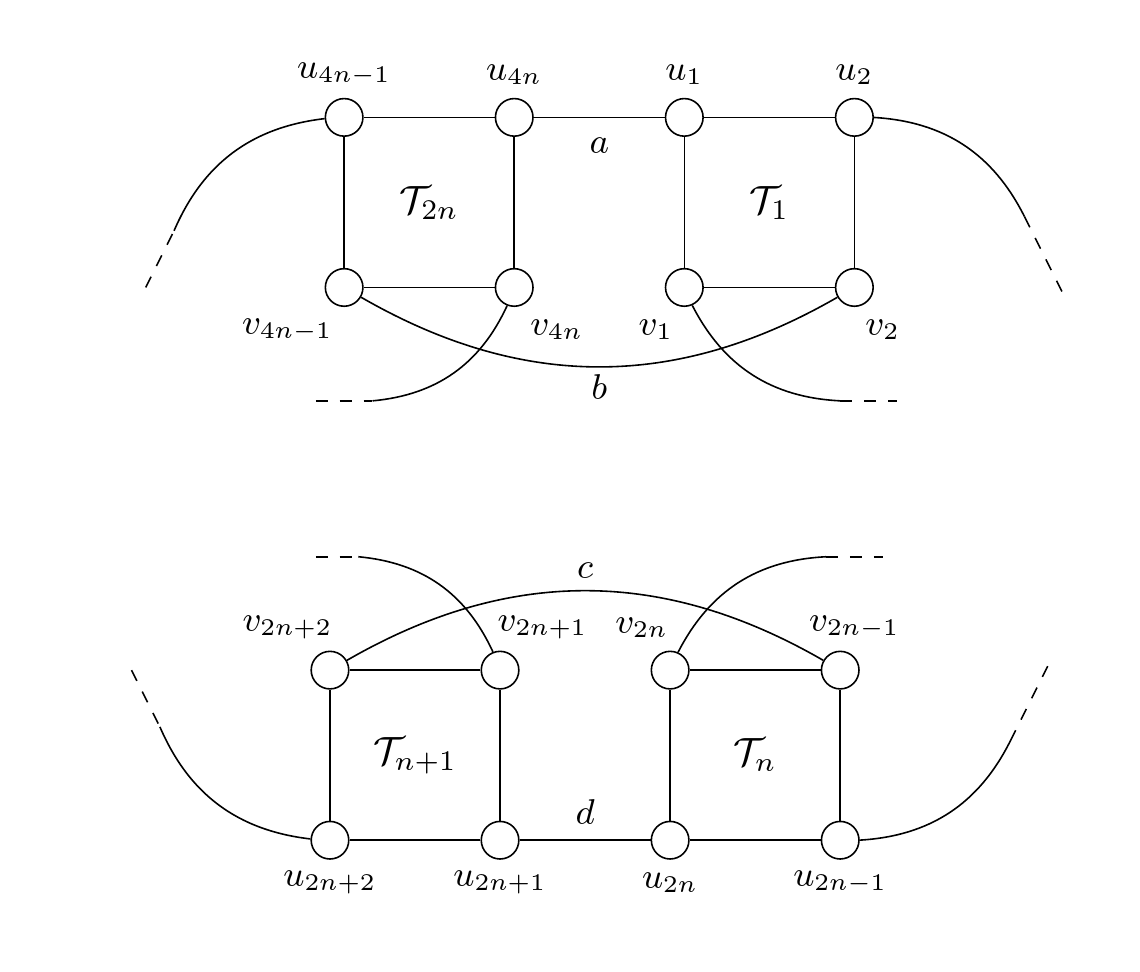}
\caption{The $2n$-crossed prism graph $\cCP_n$ and the $4-$pole $\cT_j$, for $j=1,\dots,2n$.}
\label{figure 1}
\end{figure}

In what follows we will use the same notation used in \cite[Section 2]{papillon}, as it is easy to see that Crossed Prism graphs almost have the same structure of Balanced Papillon graphs, except for the principal $4$-edge-cut.\\


A \emph{multipole} $\mathcal{Z}$ is defined by a set of vertices $V(\mathcal{Z})$ and a collection of generalised edges, where each generalised edge is either a standard edge (with two endvertices) or a semiedge. A \emph{semiedge} is a generalised edge with exactly one endvertex. The set of semiedges of $\mathcal{Z}$ is denoted by $\delta\mathcal{Z}$, while the set of standard edges (with two endvertices) is represented by $E(\mathcal{Z})$. Two semiedges are said to be \emph{joined} if they are both removed and their respective endvertices are made adjacent. A \emph{$k$-pole} refers to a multipole with $k$ semiedges. A perfect matching $M$ of a $k$-pole $\mathcal{Z}$ is a subset of the generalised edges of $\mathcal{Z}$ such that every vertex of $\mathcal{Z}$ is incident with exactly one generalised edge in $M$.

In what follows, we shall construct Crossed Prism graphs by joining the semiedges of various multipoles. Specifically, given a perfect matching $M$ of a graph $\cCP_n$ and a multipole $\mathcal{Z}$ used as a building block for constructing $\cCP_n$, we say that $M$ contains a semiedge $e$ of the multipole $\mathcal{Z}$ if $M$ includes the edge in $\cCP_n$ that results from joining $e$ to another semiedge during the construction of $\cCP_n$.

The $4$-pole $\mathcal{Z}$ with vertex set $\{z_{1}, z_{2},z_{3},z_{4}\}$,  such that $E(\mathcal{Z})$ induces the $4$-cycle $(z_{1}, z_{2},z_{3},z_{4})$ and with exactly one semiedge incident to each of its vertices is referred to as a \emph{$C_{4}$-pole} (see Figure \ref{figure 3}). For each $i=1,\dots,4$, let the semiedge incident to $z_{i}$ be denoted by $f_{i}$. The semiedges $f_{1}$ and $f_{2}$ are referred to as the \emph{upper left semiedge} and the \emph{upper right semiedge} of $\mathcal{Z}$, respectively. On the other hand, the semiedges $f_{3}$ and $f_{4}$ are referred to as the \emph{lower left semiedge} and the \emph{lower right semiedge} of $\mathcal{Z}$, respectively (see Figure \ref{figure 3}).

\begin{figure}[h!]
\centering
\includegraphics[width=0.9\textwidth]{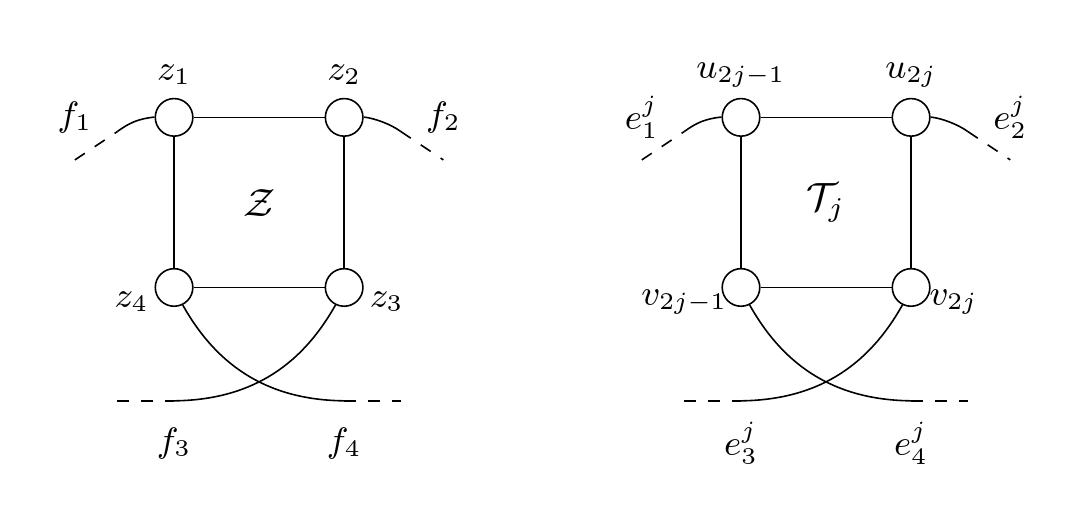}
\caption{A $C_4-$pole $\mathcal{Z}$ and the $\cT_j-$pole in $\cCP_n.$}
\label{figure 3}
\end{figure}

For some integer $m\geq 1$, let $\mathcal{Z}_{1},\ldots, \mathcal{Z}_{m}$ be $m$ copies of the above $C_4$-pole $\mathcal{Z}$. For each $j=1,\dots,m$, let $V(\mathcal{Z}_{j})=\{z_{1}^{j}, z_{2}^{j},z_{3}^{j},z_{4}^{j}\}$, and let $f_1^j,f_2^j,f_3^j,f_4^j$ be the semiedges of $\mathcal{Z}_{j}$ respectively incident to $z_{1}^{j}, z_{2}^{j},z_{3}^{j},z_{4}^{j}$ such that $f_{1}^{j}$ and $f_{2}^{j}$ are the upper left and upper right semiedges of $\mathcal{Z}_{j}$, whilst $f_{3}^{j}$ and $f_{4}^{j}$ are the lower left and lower right semiedges of $\mathcal{Z}_{j}$. A \emph{chain of $C_{4}$-poles} of length $m\geq 2$, is the $4$-pole obtained by respectively joining $f_{2}^{j}$ and $f_{4}^{j}$ (upper and lower right semiedges of $\mathcal{Z}_{j}$) to $f_{1}^{j+1}$ and $f_{3}^{j+1}$ (upper and lower left semiedges of $\mathcal{Z}_{j+1}$), for every $j=1,\dots,m-1$. When $m=1$, a chain of $C_{4}$-poles of length $1$ is just a $C_{4}$-pole. For simplicity, we shall refer to a chain of $C_{4}$-poles of length $m$, as a \emph{$m$-chain of $C_{4}$-poles}, or simply a \emph{$m$-chain}. The semiedges $f_{1}^{1}$ and $f_{3}^{1}$ (similarly, $f_{2}^{m}$ and $f_{4}^{m}$) are referred to as the upper left and lower left (respectively, upper right and lower right) semiedges of the $m$-chain.
A chain of $C_{4}$-poles of any length has exactly four semiedges. For simplicity, when we say that $e_{1},e_{2},e_{3},e_{4}$ are the four semiedges of a chain $\mathcal{Z}'$ of $C_{4}$-poles (possibly of length $1$), we mean that $e_{1}$ and $e_{2}$ are respectively the upper left and upper right semiedges of $\mathcal{Z}'$, whilst $e_{3}$ and $e_{4}$ are respectively the lower left and lower right semiedges of the same chain $\mathcal{Z}'$ (see Figure \ref{figure aa}). The semiedges $e_{1}$ and $e_{2}$ (similarly, $e_{3}$ and $e_{4}$) are referred to collectively as the \emph{upper semiedges} (respectively, \emph{lower semiedges}) of $\mathcal{Z}'$. In a similar way, the semiedges $e_{1}$ and $e_{3}$ (similarly, $e_{2}$ and $e_{4}$) are referred to collectively as the \emph{left semiedges} (respectively, \emph{right semiedges}) of $\mathcal{Z}'$.

\begin{figure}[h!]
\centering
\includegraphics[width=0.9\textwidth]{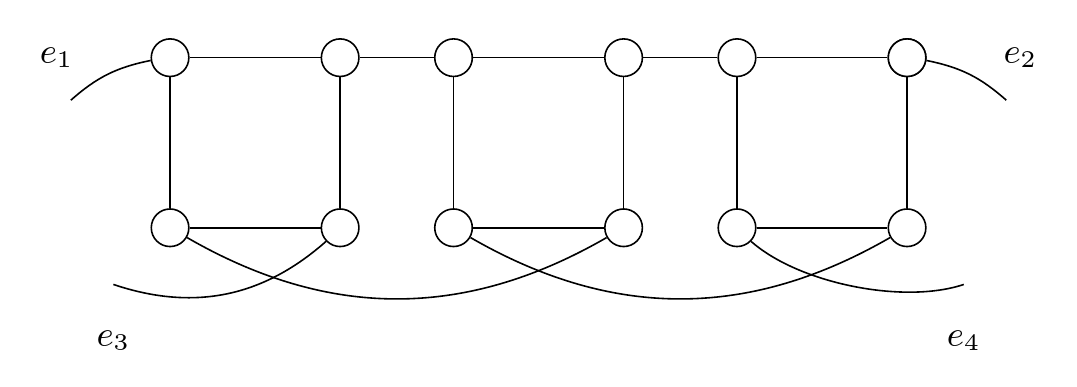}
\caption{A chain of $C_4-$poles of length 3 having semiedges $e_1, e_2,e_3,e_4.$}
\label{figure aa}
\end{figure}

Let $M$ be a perfect matching of the $2n$-crossed prism graph $\cCP_n$. Since $\cX=\{a,b,c,d\}$ is a $4$-edge-cut of $\cCP_n$, $|M\cap \cX|\equiv 0\pmod{2}$, that is, $|M\cap \cX|$ is $0, 2$ or $4$. The following is a useful lemma which shall be used frequently in the results that follow.

\begin{lemma}\label{lem1}
	Let M be a perfect matching of the $2n$-crossed prism graph $\cCP_n$ and let $\cX$ be its principal $4$-edge-cut. If $|M \cap \cX|=k$, then $|M \cap  \delta\cT_j|=k$ for each $j=1,\dots,2n$.
\end{lemma}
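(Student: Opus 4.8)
The plan is to exploit the decomposition of $\cCP_n$ into the cyclic chain of $C_4$-poles $\cT_1,\dots,\cT_{2n}$ set up above, where $\cT_j$ has vertex set $\{u_{2j-1},u_{2j},v_{2j-1},v_{2j}\}$ and internal $4$-cycle $(u_{2j-1}\,u_{2j}\,v_{2j}\,v_{2j-1})$. First I would fix notation for the interfaces: for each $j$ (read cyclically modulo $2n$) let $\cI_j$ be the pair consisting of the upper-right and lower-right semiedges joining $\cT_j$ to $\cT_{j+1}$, so that each $\cI_j$ has exactly two edges, $\delta\cT_j=\cI_{j-1}\cup\cI_j$ (with $\cI_0:=\cI_{2n}$), and the principal cut is recovered as $\cX=\cI_n\cup\cI_{2n}$ (the two distinguished interfaces $\cT_n\mid\cT_{n+1}$ and $\cT_{2n}\mid\cT_1$, namely $\{c,d\}$ and $\{a,b\}$). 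Writing $a_j:=|M\cap\cI_j|\in\{0,1,2\}$, the lemma reduces to showing that $a_{j-1}+a_j$ is independent of $j$ and equals $a_n+a_{2n}$.

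The heart of the argument is a purely local claim: for every $C_4$-pole $\cT_j$ and every perfect matching $M$, the number of \emph{left} semiedges of $\cT_j$ lying in $M$ equals the number of \emph{right} semiedges lying in $M$; equivalently $a_{j-1}=a_j$. I would prove this by enumerating the perfect matchings of a single $C_4$-pole. Each of the four vertices of $\cT_j$ is covered exactly once, either by an edge of its internal $4$-cycle or by its unique semiedge; running through the seven possibilities (the two matchings using two internal edges and no semiedge, the four matchings using one internal edge and two semiedges, and the matching using all four semiedges) shows in each case that the left vertices $u_{2j-1},v_{2j}$ and the right vertices $u_{2j},v_{2j-1}$ contribute equally many semiedges to $M$. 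The structural point that makes this work is that the two left semiedges sit at diagonally opposite corners of the $4$-cycle, and likewise the two right semiedges, so any internal edge of the cycle covers exactly one left and one right vertex.

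Granting the local claim, the conclusion is immediate: $a_{j-1}=a_j$ for all $j=1,\dots,2n$ (cyclically) forces every $a_j$ to equal a single common value $t$, whence $|M\cap\delta\cT_j|=a_{j-1}+a_j=2t$ for every $j$, while $|M\cap\cX|=a_n+a_{2n}=2t$; thus $|M\cap\delta\cT_j|=|M\cap\cX|=k$ for each $j$, as required. I expect the only delicate point to be the bookkeeping rather than any genuine difficulty: one must match each semiedge of $\cT_j$ to the correct interface and verify that the wrap-around edges $a,b$ and the middle-cut edges $c,d$ are distributed precisely as $\cI_{2n}$ and $\cI_n$, after which both the local computation and the propagation around the cycle are routine.
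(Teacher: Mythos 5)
Your proof is correct, and its engine is genuinely different from the paper's. The paper proves the lemma by a three-way case split on $k\in\{0,2,4\}$, propagating a case-specific statement around the cyclic chain of $C_4$-poles: for $k=0$, ``no left semiedges forces no right semiedges''; for $k=4$, ``both left forces both right''; and for $k=2$ it must first rule out $M\cap\cX=\{a,b\}$ and $M\cap\cX=\{c,d\}$ by contradiction, then invoke the automorphism exchanging the inner- and outer-cycles together with a mirror symmetry to assume $M\cap\cX=\{a,d\}$ without loss of generality, and finally use parity across the $4$-edge-cut $\delta\cT_1$ to propagate ``exactly one left forces exactly one right''. Your single local lemma --- in any perfect matching each $C_4$-pole uses equally many left and right semiedges, because same-side semiedges are attached at diagonally opposite corners of the internal $4$-cycle, so every internal edge covers exactly one left and one right vertex --- subsumes all three propagation statements at once, eliminating the case split, the automorphism reduction, and the separate parity argument; your enumeration of the seven matchings of a $C_4$-pole is exhaustive and each count checks out. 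Your bookkeeping is also accurate: the left semiedges of $\cT_j$ are at $u_{2j-1}$ and $v_{2j}$, the right ones at $u_{2j}$ and $v_{2j-1}$, and indeed $\delta\cT_j=\cI_{j-1}\cup\cI_j$ with $\cX=\cI_n\cup\cI_{2n}=\{c,d\}\cup\{a,b\}$. Incidentally, your argument isolates exactly the structural feature distinguishing crossed prisms from ordinary prisms (there, same-side semiedges sit at adjacent corners and the balance claim fails). What the paper's longer route buys is explicitness: its Case~3 exhibits which edges of $\cX$ and which semiedge pairs per pole can occur, which is then quoted as Corollaries~\ref{cor1} and~\ref{cor2}; but your proof recovers both just as directly, since $k=2$ forces the common interface value $t=1$, i.e.\ exactly one edge of $M$ in each of $\{a,b\}$ and $\{c,d\}$, and exactly one left plus one right semiedge of $M$ in each $\cT_j$, never two same-side ones.
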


\begin{proof}
Let $M$ be a perfect matching of the $2n$-crossed prism graph $\cCP_n$. We first note that the left semiedges of a $C_4-$pole are contained in the same perfect matching. The lemma is proved by considering three cases depending on the possible values of $k$, that is 0,2 or 4.

\noindent\textbf{Case 1:} $k=0$.

Since $a$ and $b$ do not belong to $M$, the left semiedges of $\cT_1$ aren't contained in $M$, and so $M$ cannot contain its right semiedges. Therefore $|M \cap \delta \cT_1|=0$. Consequently, the left semiedges of $\cT_2$ aren't contained in M implying again that $|M \cap \delta\cT_2|=0$. By repeating the same argument up till the $n^{th}$ $C_4-$pole, we have $|M \cap \delta \cT_j|=k$, for every $j=1,\dots,n$. By noting that $c$ and $d$ don't belong to $M$ and repeating a similar argument to the 4$-$poles in the left $n-$chain, we can deduce that $|M \cap \delta \cT_j|=k$, for every $j=1,\dots,2n$.
		
\noindent\textbf{Case 2: } $k=4$.
  
Since $a$ and $b$ belong to $M$, the left semiedges of $\cT_1$ are contained in $M$, and so $M$  contains its right semiedges as well. Therefore,  $|M \cap \delta \cT_1|=4$. Consequently, the left semiedges of $\cT_2$ are contained in $M$ implying again that $|M \cap \delta \cT_2|=4$. As in Case 1, by noting that $c$ and $d$ belong to $M$ and repeating a similar argument to the 4$-$poles in the left $n-$chain, we can deduce that $|M \cap \delta \cT_j|=4$, for every $j=1,\dots,2n$.

\noindent\textbf{Case 3:} $k=2$.	 
  
In this case, we first claim that $M \cap \cX$ must be equal to $\{a,d\}$ or $\{a,c\}$ or $\{b,c\}$ or $\{b,d\}$. In fact, suppose that $M\cap \mathcal{X}=\{a,b\}$, without loss of generality. This means that the right semiedges of $\cT_1$ are also contained in $M$, implying that $|M\cap \cT_{1}|=4$. This implies that the left semiedges of $\cT_2$ are contained in $M$, which forces $|M\cap \cT_{j}|$ to be equal to 4, for every $j=1,\dots,2n$. In particular, $|M\cap \delta \cT_{n}|=4$, implying that the edges $c$ and $d$ belong to $M$, a contradiction since $M\cap \cX=\{a,b\}$. This proves our claim. 
The natural automorphism $\psi$ of $\cCP_n$, which exchanges the outer- and inner-cycles, exchanges also $\{a,d\}$ with $\{b,c\}$, combined with the mirror automorphism through the horizontal axis allow us to assume, without loss of generality, that $M \cap \mathcal{X}=\{a,d\}$. Since $b \notin M$, $1 \leq |M \cap \delta \cT_1|\leq 3$.  But $\delta \cT_1$ corresponds to a 4-edge cut in $\cCP_n$, and so, by using a parity argument, $ |M \cap \delta \cT_1|$ must be equal to 2, implying that exactly one of the right semiedges of $\cT_1$ is contained in $M$.  This means that exactly one left semiedge of $\cT_{2}$ is contained in $M$, and consequently, by a similar argument now applied to $\cT_{2}$, we obtain $|M\cap \cT_{2}|=2$. By repeating the same argument and noting that $\cT_{n+1}$ has exactly one left semiedge (corresponding to the edge $d$) contained in $M$, one can deduce that $|M\cap \cT_{j}|=2$ for every $j=1,\dots,2n$.

\end{proof}

The following two results are two consequences of the above lemma and they both follow directly from its proof.

\begin{corollary}\label{cor1}
	Let $M$ be a perfect matching of $\cCP_n$ and let $\mathcal{X}$ be its principal $4$-edge-cut. If $|M\cap \mathcal{X}|=2$, then $M\cap \cX$ is equal to $\{a,d\}$ or $\{a,c\}$ or $\{b,c\}$ or $\{b,d\}$.
\end{corollary}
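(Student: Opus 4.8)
The plan is to observe that this corollary is nothing more than the claim isolated at the start of Case~3 in the proof of Lemma~\ref{lem1}, so I would simply extract and repackage that portion of the argument. Recall the set-up: $\cX=\{a,b,c,d\}$ is the $4$-edge-cut separating $\cCP_n$ into a right $n$-chain $\cT_1,\dots,\cT_n$ and a left $n$-chain $\cT_{n+1},\dots,\cT_{2n}$, where $\{a,b\}$ are precisely the two left semiedges of $\cT_1$ and $\{c,d\}$ are precisely the two right semiedges of $\cT_n$. Since $\cX$ has exactly six $2$-element subsets, and the four sets named in the statement are all of them except the two ``same-side'' pairs $\{a,b\}$ and $\{c,d\}$, the whole corollary reduces to showing that, when $|M\cap\cX|=2$, neither $M\cap\cX=\{a,b\}$ nor $M\cap\cX=\{c,d\}$ can occur.

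First I would rule out $M\cap\cX=\{a,b\}$. If both left semiedges of the $C_4$-pole $\cT_1$ belong to $M$, they cover the two vertices $u_1$ and $v_2$; the remaining two vertices of $\cT_1$ then have all of their internal $4$-cycle edges blocked, so each can only be matched by its own right semiedge. Hence both right semiedges of $\cT_1$ lie in $M$ and $|M\cap\delta\cT_1|=4$. These right semiedges are the left semiedges of $\cT_2$, so the same implication applies to $\cT_2$, and iterating it along the chain $\cT_1,\dots,\cT_n$ forces $|M\cap\delta\cT_n|=4$, that is $c,d\in M$. This gives $|M\cap\cX|\ge 4$, contradicting the hypothesis $|M\cap\cX|=2$.

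The pair $\{c,d\}$ is then dispatched by symmetry: exactly as in the proof of Lemma~\ref{lem1}, the automorphism $\psi$ exchanging the inner and outer cycles, combined with the mirror automorphism through the horizontal axis, carries $\{c,d\}$ onto $\{a,b\}$, so the same propagation argument rules it out. (Alternatively, one may run the identical propagation starting from $\cT_n$ and proceeding backwards along the chain, since both right semiedges in $M$ likewise force both left semiedges in $M$.) As these are the only two forbidden $2$-subsets, the surviving possibilities are exactly $\{a,d\},\{a,c\},\{b,c\},\{b,d\}$, which is the assertion. I do not anticipate a genuine difficulty, as all the substance lives already in Lemma~\ref{lem1}; the only point demanding care is the bookkeeping of which edges of $\cX$ are left versus right semiedges, so that the two ``same-side'' pairs are correctly identified and the invoked symmetry is verified to swap $\{a,b\}$ with $\{c,d\}$ rather than with a mixed pair.
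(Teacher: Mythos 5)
Your proposal is correct and follows essentially the same route as the paper: the paper derives this corollary directly from the claim at the start of Case~3 in the proof of Lemma~\ref{lem1}, where $M\cap\mathcal{X}=\{a,b\}$ is excluded by exactly your propagation argument along the chain $\cT_1,\dots,\cT_n$ (forcing $c,d\in M$, a contradiction), with the pair $\{c,d\}$ handled by symmetry, just as you do. Your explicit bookkeeping of left versus right semiedges and the alternative backwards-propagation for $\{c,d\}$ are sound but add nothing beyond the paper's ``without loss of generality.''
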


\begin{corollary}\label{cor2}
	Let $M$ be a perfect matching of $\cCP_n$ and let $\mathcal{X}$ be its principal $4$-edge-cut such that $|M\cap \mathcal{X}|=2$. For each $j=1,\dots,2n$, $M$ contains exactly one of the following sets of semiedges: $\{e_1^j,e_2^j\}, \{e_3^j,e_4^j\}, \{e_1^j,e_4^j\}, \{e_2^j,e_3^j\}$, that is, of all possible pairs of semiedges of $\cT_{j}$, $\{e_1^j,e_3^j\}$ and $\{e_2^j,e_4^j\}$ cannot be contained in $M$.
\end{corollary}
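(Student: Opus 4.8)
The plan is to localise the statement to a single $C_4$-pole by invoking Lemma~\ref{lem1}, and then to read off the admissible semiedge pairs from the structure of a $4$-cycle. Since $|M\cap\cX|=2$, Lemma~\ref{lem1} (the case $k=2$) gives $|M\cap\delta\cT_j|=2$ for every $j=1,\dots,2n$. Thus the restriction of $M$ to the $C_4$-pole $\cT_j$ is a perfect matching of $\cT_j$ using exactly two of its four semiedges, and these two semiedges form exactly one of the six possible pairs. It therefore suffices to rule out the pair of left semiedges $\{e_1^j,e_3^j\}$ and the pair of right semiedges $\{e_2^j,e_4^j\}$, which leaves precisely the four listed pairs.

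To do this I would use the forcing already isolated in the proof of Lemma~\ref{lem1}: in a $C_4$-pole, if both left semiedges belong to a perfect matching then both right semiedges must too. Concretely, writing $z_1,z_2,z_3,z_4$ for the vertices of $\cT_j$ with $e_i^j$ incident to $z_i$ and with the internal edges forming the cycle $(z_1z_2z_3z_4)$, the semiedges $e_1^j,e_3^j$ sit at the opposite vertices $z_1,z_3$. If both were in $M$, then $z_2$ could no longer be saturated by an internal edge, since its only internal neighbours $z_1,z_3$ are already matched; hence $z_2$ would be forced onto its semiedge $e_2^j$ and, symmetrically, $z_4$ onto $e_4^j$, giving $|M\cap\delta\cT_j|=4$, a contradiction. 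The same argument, applied after the automorphism $\psi$ or simply by symmetry, excludes $\{e_2^j,e_4^j\}$. Hence the two semiedges of $\cT_j$ lying in $M$ consist of one left and one right semiedge, i.e.\ one of $\{e_1^j,e_2^j\},\{e_3^j,e_4^j\},\{e_1^j,e_4^j\},\{e_2^j,e_3^j\}$.

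There is no serious obstacle here — the corollary is essentially immediate once Lemma~\ref{lem1} supplies $|M\cap\delta\cT_j|=2$ — but the one step deserving care is the identification of the \emph{left} (respectively \emph{right}) semiedges with a pair of \emph{opposite}, that is non-adjacent, vertices of the internal $4$-cycle of $\cT_j$. It is exactly this non-adjacency that prevents the two leftover vertices from being matched by a single internal edge, and so forces all four semiedges into $M$; keeping the labelling of $z_1,\dots,z_4$ consistent with the upper/lower and left/right conventions for the $e_i^j$ is the only bookkeeping to watch.
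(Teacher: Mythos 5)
Your proof is correct and takes essentially the same route as the paper: the paper states that Corollary~\ref{cor2} follows directly from the proof of Lemma~\ref{lem1}, whose Case 3 establishes that each $\cT_j$ has exactly one left and exactly one right semiedge in $M$ --- precisely the one-left-one-right conclusion you reach. Your explicit observation that the two left (respectively right) semiedges attach to \emph{opposite} vertices of the internal $4$-cycle, so that including both would force all four semiedges into $M$, is exactly the structural forcing the paper uses implicitly (``the left semiedges of a $C_4$-pole are contained in the same perfect matching''), just spelled out as a local contradiction against $|M\cap\delta\cT_j|=2$.
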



In the sequel, the process of traversing one path after another shall be referred to as the \emph{concatenation of paths}. When two paths $P$ and $Q$ have endvertices $x, y$ and $y, z$, respectively, we denote the concatenated path as $PQ$, representing the path that starts at $x$ and ends at $z$ by traversing $P$ followed by $Q$. Without loss of generality, if $x$ is adjacent to $y$, meaning $P$ is a path on two vertices, we may denote this as $xyQ$ instead of $PQ$.

\begin{lemma}\label{lemma complementary}
	Let $M_{1}$ be a perfect matching of $\cCP_n$ such that $|M_{1}\cap \cX|=2$.
	\begin{enumerate}
		\item[$(1)$] There exists a perfect matching $M_2$ of $\cCP_n$ such that 
		$|M_{2}\cap \mathcal{X}|=2$ and \\ $M_1\cap M_2=\emptyset$.
		\item[$(2)$] The complementary $2$-factors of $M_1$ and $M_2$ are both Hamiltonian cycles.
	\end{enumerate}
\end{lemma}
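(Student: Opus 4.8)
The plan is to exploit the structure established by Lemma~\ref{lem1} and Corollary~\ref{cor2}, which tells us that when $|M_1\cap\cX|=2$ every $4$-pole $\cT_j$ meets $M_1$ in exactly one of the four ``admissible'' semiedge-pairs $\{e_1^j,e_2^j\}, \{e_3^j,e_4^j\}, \{e_1^j,e_4^j\}, \{e_2^j,e_3^j\}$. The key observation is that inside a single $C_4$-pole, once we know which pair of semiedges lies in $M_1$, the matching on the two internal vertices (the two edges of the $4$-cycle used) is forced; and there is a natural ``complementary'' choice on each pole. So first I would define $M_2$ pole-by-pole: for each $\cT_j$, whichever admissible pair $M_1$ uses, assign to $M_2$ a complementary admissible pair together with the complementary internal matching, chosen so that $M_1\cap M_2=\emptyset$ and so that the semiedge assignments are consistent across the joins (i.e. a right semiedge of $\cT_j$ is in $M_2$ iff the left semiedge of $\cT_{j+1}$ it joins to is in $M_2$). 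By Corollary~\ref{cor1} and the up/down symmetry we may assume $M_1\cap\cX=\{a,d\}$, so $M_2$ should take $\{b,c\}$, keeping $|M_2\cap\cX|=2$. Checking that this local rule glues into a genuine perfect matching $M_2$ of $\cCP_n$ proves part $(1)$.

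For part $(2)$ I would analyse the complementary $2$-factors $F_1=E(\cCP_n)\setminus M_1$ and $F_2=E(\cCP_n)\setminus M_2$ directly. Since $\cCP_n$ is cubic, each $F_i$ is a $2$-factor, so it suffices to show each is a single cycle through all $8n$ vertices rather than a disjoint union of several cycles. The natural tool is the path/concatenation language introduced just before the lemma: within each $\cT_j$ the two non-matching edges of the $4$-cycle, together with the two spokes or crossing edges forced to lie in $F_i$, determine a short path (a \emph{segment}) connecting two of the four semiedge-positions of $\cT_j$. I would compute, for each admissible $M_1$-type of $\cT_j$, exactly which two boundary positions the $F_1$-segment through $\cT_j$ links, and likewise for $F_2$. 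Concatenating these segments around the two $n$-chains and across the principal $4$-edge-cut $\cX$ then traces out $F_i$; the goal is to verify that the concatenation closes up into one Hamiltonian cycle, using the edges of $\cX$ not in $M_i$ (namely $b,c$ for $F_1$ and $a,d$ for $F_2$) as the links that stitch the right $n$-chain to the left $n$-chain.

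The main obstacle I expect is the bookkeeping in part $(2)$: ruling out that $F_i$ breaks into two or more smaller cycles. Locally every segment looks the same regardless of the pole's type, so connectivity is a \emph{global} property that depends on how the segments chain together through the whole graph and, crucially, on how the two $n$-chains are welded along $\cX$. I would handle this by tracking the ``entry/exit'' boundary positions as a permutation-like routing and showing the induced routing is a single cycle; the twist in the inner cycle (the crossing edges $v_{2i-1}v_{2i+2}$ and the wrap-around edges $b=v_2v_{4n-1}$, $c=v_{2n-1}v_{2n+2}$) is what must be shown to produce one orbit rather than two. A clean way to discharge this is to follow the cycle explicitly once, exhibiting the vertex sequence as a concatenation of the per-pole segments, and observe that after passing through all $2n$ poles and both cut-edges one returns to the start having visited every vertex exactly once; the up/down and inner/outer automorphism $\psi$ reduces the number of cases to check. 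A final remark would note that the same construction applied with $M_2$ in place of $M_1$ gives the symmetric statement, so both complementary $2$-factors are simultaneously Hamiltonian.
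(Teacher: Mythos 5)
Your proposal follows essentially the same route as the paper: both construct $M_2$ by taking, in each $C_4$-pole, the semiedges and the internal edge complementary to those used by $M_1$ (the paper packages this as $M_2=E(\cCP_n)\setminus(M_1\cup N)$ for an auxiliary matching $N$), and both prove part $(2)$ by concatenating the per-pole paths $E(\cT_j)\setminus M_i$ through the unique linking edge between consecutive poles and the two edges of $\cX$ outside $M_i$, exactly as in the paper's case analysis over $M_1\cap\cX$. The only slip is numerical: since $|M_1\cap\delta\cT_j|=2$ forces exactly one internal edge of each pole into $M_1$, the segment through a pole consists of three (not two) internal edges, i.e.\ a path of length $3$.
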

\begin{proof}
	(1)\,\, Since $|M_{1}\cap \mathcal{X}|=2$, by Lemma \ref{lem1} we get that $|M_{1}\cap \delta \cT_{j}|=2$ for every $j=1,\dots,2n$. For each $j$, let $P^{(j)}$ be the subgraph of $\cCP_n$ which is induced by $E(\cT_{j})\setminus M_{1}$. Note that $\bigcup\limits_{j=1}^{2n} V(P^{(j)})=V(G)$. By Corollary \ref{cor2}, each $P^{(j)}$ is a path of length 3. Letting $N$ be the unique perfect matching of $\cCP_n$ which intersects each $E(P^{(j)})$ in exactly two edges, we note that $M_{1} \cap N=\emptyset$. Let $M_{2}=E(G) \setminus (M_{1}\cup N)$. Since $M_{1}$ and $N$ are two disjoint perfect matchings, $M_{2}$ is also a perfect matching of $\cCP_n$ and, in particular, $M_{2}$ contains $\mathcal{X}\setminus (M_{1}\cap \mathcal{X})$. Thus, $|M_{2}\cap \mathcal{X}|=2$ and $M_{1}\cap M_{2}=\emptyset$, proving part $(1)$.

	(2)\,\, Let $M_{2}$ be as in part $(1)$, that is, $|M_{2}\cap \mathcal{X}|=2$ and $M_{1}\cap M_{2}=\emptyset$. For $i,j=1,\dots,2n$ with $i \not=j$, let $Q^{(i,j)}$ be the subgraph of $\cCP_n$ which is induced by 
	\[
	M_{2}\cap \{xy\in E(G): x\in V(\cT_{i}),\ y\in V(\cT_{j})\},
	\]
	that is, $E(Q^{(i,j)})$ is either empty or consists of exactly one edge, that is, $Q^{(i,j)}$ is a path of length 1.	 
	 When $M_{1}\cap \mathcal{X}=\{a,d\}$, we can form a Hamiltonian cycle of $\cCP_n$ (not containing $M_{1}$) by considering the following concatenation of paths:
	\begin{linenomath}
		$$P^{(1)}Q^{(1,2)}\ldots Q^{(n-1,n)}P^{(n)}Q^{(n,n+1)}P^{(n+1)}Q^{(n+1,n+2)}\ldots P^{(2n)}Q^{(2n,1)},$$
	\end{linenomath}
	where $Q^{(1,2)}$ and $Q^{(n+1,n+2)}$ are respectively followed by $P^{(2)}$ and $P^{(n+2)}$, and, $Q^{(n,n+1)}$ and $Q^{(2n,1)}$ consist of the edges $c$ and $b$, respectively. \\
	When $M_{1}\cap \mathcal{X}=\{a,c\}$, we can form a Hamiltonian cycle of $\cCP_n$ (not containing $M_{1}$) by considering the following concatenation of paths:
	\begin{linenomath}
	$$P^{(1)}Q^{(1,2)}\ldots Q^{(n-1,n)}P^{(n)}Q^{(n,n+1)}P^{(n+1)}Q^{(n+1,n+2)}\ldots P^{(2n)}Q^{(2n,1)},$$
\end{linenomath}
where $Q^{(1,2)}$ and $Q^{(n+1,n+2)}$ are respectively followed by $P^{(2)}$ and $P^{(n+2)}$, and, $Q^{(n,n+1)}$ and $Q^{(2n,1)}$ consist of the edges $d$ and $b$, respectively. \\
When $M_{1}\cap \mathcal{X}=\{b,c\}$, we can form a Hamiltonian cycle of $\cCP_n$ (not containing $M_{1}$) by considering the following concatenation of paths:
	\begin{linenomath}
	$$P^{(1)}Q^{(1,2)}\ldots Q^{(n-1,n)}P^{(n)}Q^{(n,n+1)}P^{(n+1)}Q^{(n+1,n+2)}\ldots P^{(2n)}Q^{(2n,1)},$$
\end{linenomath}
where $Q^{(1,2)}$ and $Q^{(n+1,n+2)}$ are respectively followed by $P^{(2)}$ and $P^{(n+2)}$, and, $Q^{(n,n+1)}$ and $Q^{(2n,1)}$ consist of the edges $d$ and $a$, respectively.\\
Finally, when $M_{1}\cap \mathcal{X}=\{b,d\}$, we can form a Hamiltonian cycle of $\cCP_n$ (not containing $M_{1}$) by considering the following concatenation of paths:
	\begin{linenomath}
	$$P^{(1)}Q^{(1,2)}\ldots Q^{(n-1,n)}P^{(n)}Q^{(n,n+1)}P^{(n+1)}Q^{(n+1,n+2)}\ldots P^{(2n)}Q^{(2n,1)},$$
\end{linenomath}
where $Q^{(1,2)}$ and $Q^{(n+1,n+2)}$ are respectively followed by $P^{(2)}$ and $P^{(n+2)}$, and, $Q^{(n,n+1)}$ and $Q^{(2n,1)}$ consist of the edges $c$ and $a$, respectively.

Thus, the complementary 2-factor of $M_{1}$ is a Hamiltonian cycle. This is depicted in Figure \ref{figure 4}. The proof that the complementary 2-factor of $M_{2}$ is a Hamiltonian cycle follows analogously.
\end{proof}

\begin{figure}[h!]
\centering
\includegraphics[scale=0.3]{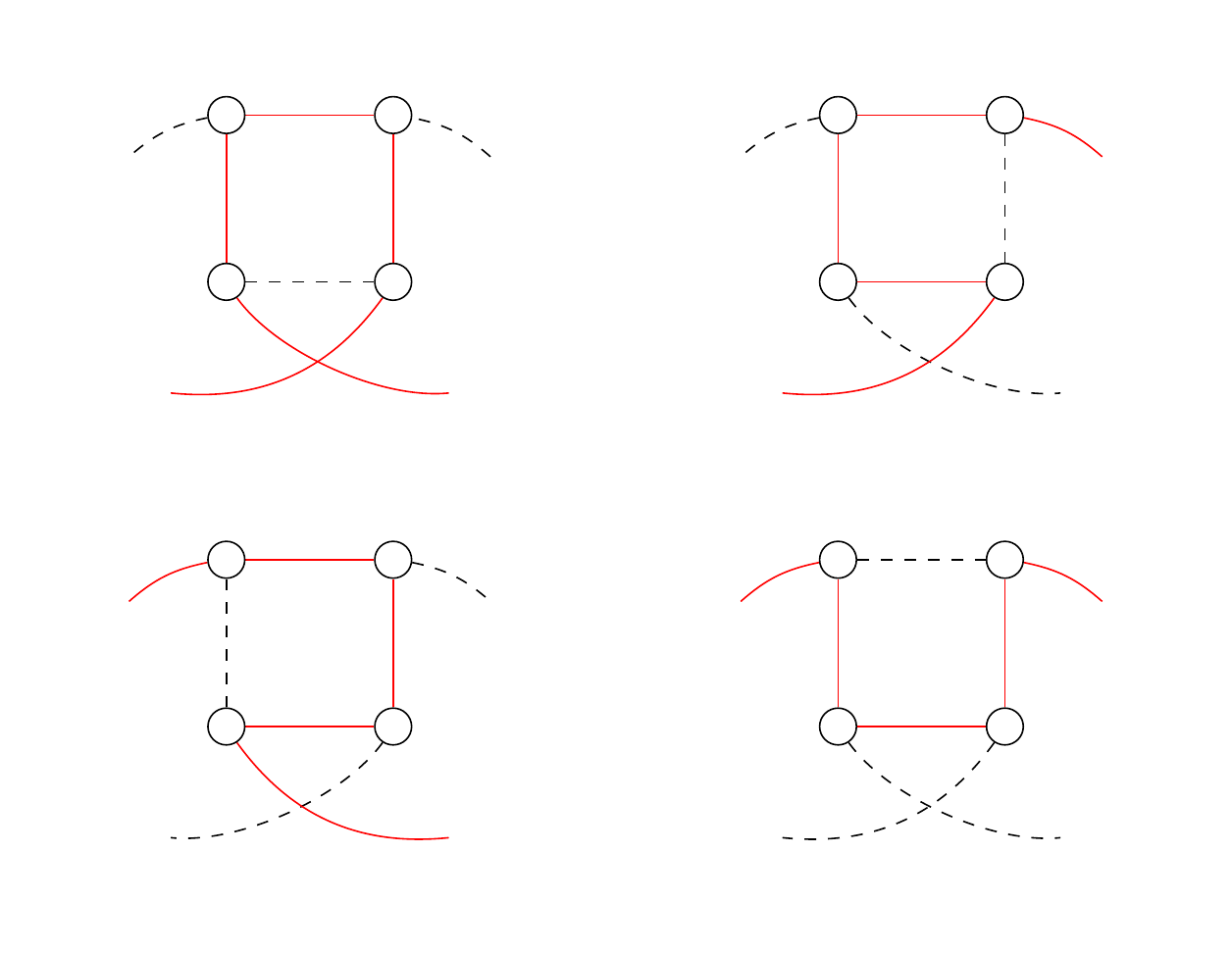}
\caption{Perfect Matching $M_1$ (bold dashed edges) with $|M_1 \cap \cX|=2$ and its complementary 2-factor (highlighted edges).}
\label{figure 4}
\end{figure}

\newpage

\begin{proposition}
	Let $n$ be a positive odd integer. Then, the $2n$-crossed prism graph $\cCP_n$ is not PMH.
\end{proposition}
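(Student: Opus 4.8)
The plan is to reduce to the two ``extremal'' kinds of perfect matching and to rule one of them out by a parity count around the cyclic chain of $C_4$-poles. By Lemma~\ref{lemma complementary}, every perfect matching $M$ with $|M\cap\cX|=2$ already extends to a Hamiltonian cycle; hence $\cCP_n$ fails to be $PMH$ as soon as we produce a \emph{single} perfect matching $M$ with $|M\cap\cX|\in\{0,4\}$ admitting no Hamiltonian extension. The clean candidate is the matching with $|M\cap\cX|=4$: by Lemma~\ref{lem1} this forces $|M\cap\delta\cT_j|=4$ for every $j$, so $M$ consists exactly of the inter-pole (connecting) edges, and its complementary $2$-factor is the disjoint union of the $2n$ internal $4$-cycles $\cT_1,\dots,\cT_{2n}$. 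Dually one may work with $|M\cap\cX|=0$, where $M$ is internal at each pole (two spokes, or the outer--inner pair).

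First I would pin down the admissible extensions. If $N$ is a perfect matching with $M\cap N=\emptyset$, then $N$ is itself a perfect matching, so Lemma~\ref{lem1} applies to $N$ as well: $|N\cap\delta\cT_j|=|N\cap\cX|$ is \emph{constant} in $j$. Thus $N$ uses the same number $\kappa\in\{0,2,4\}$ of semiedges at every pole, and when $|M\cap\cX|=4$ disjointness forces $\kappa=0$, i.e.\ $N$ is internal at each $\cT_j$. This rigidity collapses the search for a Hamiltonian extension to a finite, highly structured family, indexed by the internal ``type'' (spokes vs.\ outer--inner) chosen at each pole; the case $|M\cap\cX|=0$ is handled in parallel, now with $\kappa\in\{2,4\}$ as the only possibilities avoiding isolated $4$-cycles.

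The core of the argument is then a cycle count for $M\cup N$. I would traverse the necklace $\cT_1,\dots,\cT_{2n}$ and record, pole by pole, how the $2$-factor routes through the upper (outer) and the lower (inner) connecting edges. Because the inner cycle is twisted, each pole contributes a controlled crossing of the two strands, while the local type selected by $N$ either preserves or swaps them; composing these $2n$ local moves cyclically expresses the number of components of $M\cup N$ in terms of the parity of the number of swaps together with the parity contributed by the inner twist. The objective is to show that for $n$ odd this parity is always even, so that $M\cup N$ has at least two components for every admissible $N$ and can never be a single Hamiltonian cycle; the same bookkeeping should leave room for exactly one component when $n$ is even, consistently with $\cCP_n$ being $PMH$ there.

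The step I expect to be the main obstacle is precisely this parity computation: one must verify that the twist of the inner cycle contributes an extra $n\pmod 2$ to the crossing count around the necklace, so that the number of cycles of $M\cup N$ is governed by $n$ rather than by the chosen internal types alone. Getting the bookkeeping of the twist right---in particular confirming that no assignment of types at the poles can cancel it when $n$ is odd---is the delicate point. Once it is in place, the non-existence of a Hamiltonian extension of the chosen $M$, and hence the failure of the $PMH$-property for odd $n$, follows at once.
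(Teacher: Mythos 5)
Your opening reduction is sound: by Lemma~\ref{lemma complementary}, any perfect matching meeting $\cX$ in two edges extends to a Hamiltonian cycle, so a witness to non-PMH-ness would indeed have to satisfy $|M\cap\cX|\in\{0,4\}$, and by Lemma~\ref{lem1} such an $M$ is determined pole by pole. The fatal problem is that the parity claim at the core of your plan is false. Take your candidate $M$ with $|M\cap\cX|=4$, i.e.\ $M$ consists of all inter-pole (connecting) edges. A disjoint perfect matching $N$ is then internal to the poles, choosing at each $\cT_j$ either the two spokes or the outer--inner pair. Your strand bookkeeping is the right tool, but it yields the opposite conclusion: because of the crossing, a spoke pole swaps the upper and lower strands and an outer--inner pole does not, so $M\cup N$ has exactly two cycles when the number of spoke poles is even and exactly one (Hamiltonian) cycle when it is odd. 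That number is a \emph{free parameter of $N$}; it is not constrained by $n$ in any way, and the twist contributes no additive $n\pmod 2$ term --- it only determines \emph{what} a spoke pole does (a swap). In particular, taking $N$ to be the two spokes of $\cT_1$ together with the outer--inner pair at every other pole gives a Hamiltonian cycle through $M$: this is word for word the matching $M_2$ and the Hamiltonian cycle displayed in Case~2 of the proof of Theorem~\ref{theorem pmh crossed prisma}, and nothing in that computation uses the parity of $n$. So your candidate $M$ extends for every $n$, odd or even, and it can never witness failure of the PMH-property. The dual candidates with $|M\cap\cX|=0$ fail the same way: if such an $M$ has an odd number of spoke poles, the all-connecting $N$ works; if an even number, a disjoint $N$ with $|N\cap\cX|=2$ exists (the pattern of Corollary~\ref{cor2}, flipping between ``upper'' and ``lower'' at each outer--inner pole, closes up because the number of flips is even), and any such union is a concatenation of pole-spanning paths around the necklace, hence Hamiltonian.

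For comparison, the paper argues differently: it takes $M=\bigcup_{i=1}^{2n}\{u_{2i-1}u_{2i},v_{2i-1}v_{2i}\}$ (so $|M\cap\cX|=0$) and asserts that any $2$-factor containing it is either the union of the $2n$ pole $4$-cycles or the union of the outer and inner $4n$-cycles. Carried out correctly, your bookkeeping shows that this forcing dichotomy is incomplete for exactly the same reason your parity claim fails: since the necklace has $2n$ poles --- an \emph{even} number for every $n$ --- there are also mixed completions $N$ with $|N\cap\cX|=2$, alternating spoke poles and outer--inner poles, and their union with $M$ is a single Hamiltonian cycle. For instance, for $n=3$ the cycle $(u_1,u_2,u_3,u_4,v_4,v_3,v_6,v_5,u_5,u_6,u_7,u_8,v_8,v_7,v_{10},v_9,u_9,u_{10},u_{11},u_{12},v_{12},v_{11},v_2,v_1)$ contains $M$; and for $n=1$ note that $\cCP_1\cong\cP_4$ is the cube, which is PMH. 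So the obstacle you hit is not an artifact of your choice of matching, and it cannot be repaired by cleverer bookkeeping: the parity governing the number of components is chosen by $N$, not imposed by $n$. The genuine parity obstruction of this kind exists only when the number of $C_4$-poles in the necklace is odd, which never happens for $\cCP_n$ as defined here --- a discrepancy that affects the paper's own proof of this proposition, not just your attempt.
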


\begin{proof}
	
	Consider the following perfect matching of the $2n$-crossed prism graph $\cCP_n$:
	\[
	M=\bigcup\limits_{i=1}^{2n}\{u_{2i-1}u_{2i}, v_{2i-1}v_{2i} \}.
	\]
	We claim that $M$ cannot be extended to a Hamiltonian cycle of $\cCP_n$. For, let $F$ be a 2-factor of $\cCP_n$ containing $M$. Since $u_1u_2 \in M$ and $\cCP_n$ is cubic, $F$ contains exactly one of the following two edges: $u_1u_{4n}$ or $u_1v_1$. In the former case, the choice of $u_1u_{4n}$ forbids all the spokes of $\cCP_n$ to belong to $F$; hence, all the edges of the outer- and inner-cycle will belong to $F$. So we have two disjoint cycles each of length $4n$. In the latter case, $F$ must also contain all spokes $u_i v_i$, for $i=2,\dots,4n$. In fact, the subgraph induced by the set of spokes is exactly the complement of the 2-factor obtained in the former case. So, $F$ will consist of $2n$  disjoint 4-cycles.
\end{proof}

Let $M$ be a perfect matching of $\cCP_n$ with $|M \cap \mathcal{X}|=0$, which by Lemma \ref{lem1} implies that $|M\cap \delta \cT_{j}|=0$ for all $j=1,\dots,2n$. Now consider $j=1,\dots, 2n$ with $j \neq n, 2n$ and let ${\cT}_{(j,j+1)}$ denote a $2$-chain composed of ${\cT}_{j}$ and ${\cT}_{j+1}$.
We say that $\cT_{(j,j+1)}$ is \emph{symmetric with respect to $M$} if exactly one of the following occurs:
\begin{enumerate}
	\item[(1)] $\{u_{2j-1}v_{2j-1},u_{2j}v_{2j},u_{2j+1}v_{2j+1},u_{2j+2}v_{2j+2}\}\subset M$; or
	\item[(2)] $\{u_{2j-1}u_{2j},v_{2j-1}v_{2j},u_{2j+1}u_{2j+2},v_{2j+1}v_{2j+2}\}\subset M$.
\end{enumerate}
If neither (1) nor (2) occur, ${\cT}_{(j,j+1)}$ is said to be \emph{asymmetric with respect to $M$}. This is shown in Figure \ref{figure 5}.

\begin{figure}[h!]
\centering
\includegraphics[scale=0.25]{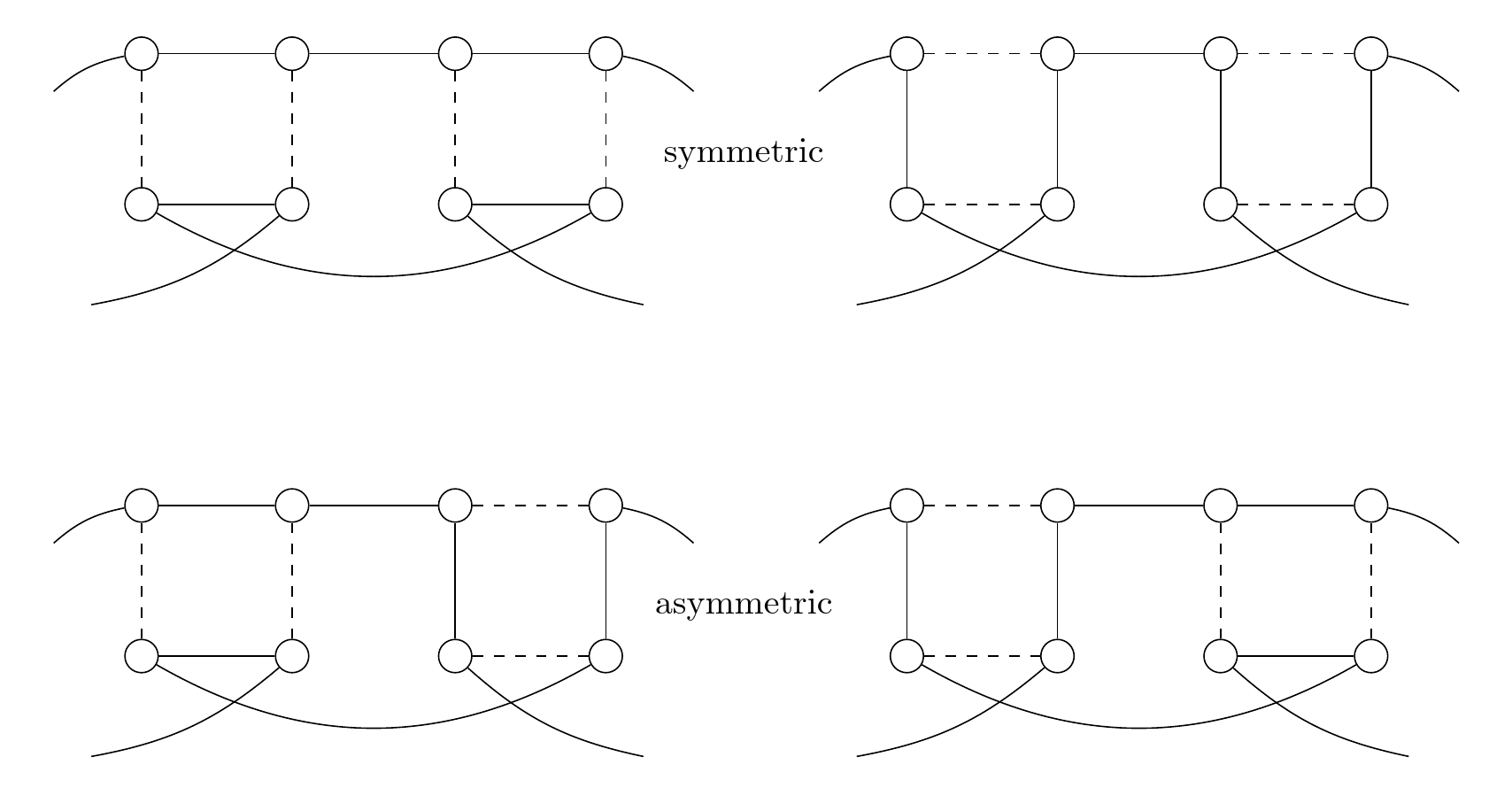}
\caption{Symmetric and asymmetric 2-chains with the dashed edges belonging to $M$.}
\label{figure 5}
\end{figure}

\begin{remark}\label{rem1}
{\em Consider a perfect matching $M_{1}$ of $\cCP_n$ such that $M_{1}$ does not intersect the principal $4$-edge-cut $\mathcal{X}$ of $\cCP_n$, that is, $M_{1}\cap \mathcal{X}=\emptyset$, and consider a $2$-chain of $\cCP_n$, say ${\cT}_{(j,j+1)}$ with $j=1,\dots, 2n$ with $j \neq n,2n$, having semiedges $e_1,e_2,e_3,e_4$, where $e_1=e_{1}^{j},e_2=e_{2}^{j+1},e_3=e_{3}^{j}$ and $e_4=e_{4}^{j+1}$.
Assume there exists a perfect matching  $M_{2}$ of $\cCP_n$ such that $|M_{2}\cap \mathcal{X}|=2$ and $M_{1}\cap M_{2}=\emptyset$ (see Figure \ref{figure remark1}). If $\mathcal{T}_{(j,j+1)}$ is symmetric with respect to $M_{1}$, then we have exactly one of the following instances:
\[
	M_{2} \cap \delta\cT_{(j,j+1)}=\{e_1,e_2\} \mbox{\ $($upper$)$; \quad or \quad} M_{2} \cap \delta{\cT}_{(j,j+1)}=\{e_3,e_4\} \mbox{\ $($lower$)$}.
\]
Otherwise, if ${\cT}_{(j,j+1)}$ is asymmetric with respect to $M_{1}$, then exactly one of the following must occur:
\[
	M_{2} \cap \delta{\cT}_{(j,j+1)}=\{e_1,e_4\} \mbox{ $(\text{upper left, lower right); or}$ }
\]
\[
	M_{2} \cap \delta{\cT}_{(j,j+1)}=\{e_2,e_3\} \mbox{ $(\text{upper right, lower left)}$}.
\]
Notwithstanding whether $\mathcal{T}_{(j,j+1)}$ is symmetric or asymmetric with respect to $M_{1}$, $(M_{1}\cup M_{2})\cap E(\mathcal{T}_{(j,j+1)})$ induces a path $($see Figure $\ref{figure remark1})$ which contains all the vertices of $V(\mathcal{T}_{(j,j+1)})$, and whose endvertices are the endvertices of the semiedges in $M_{2}\cap\partial\mathcal{T}_{(j,j+1)}$. 
}
\end{remark}

\begin{figure}[h!]
\centering
\includegraphics[scale=0.25]{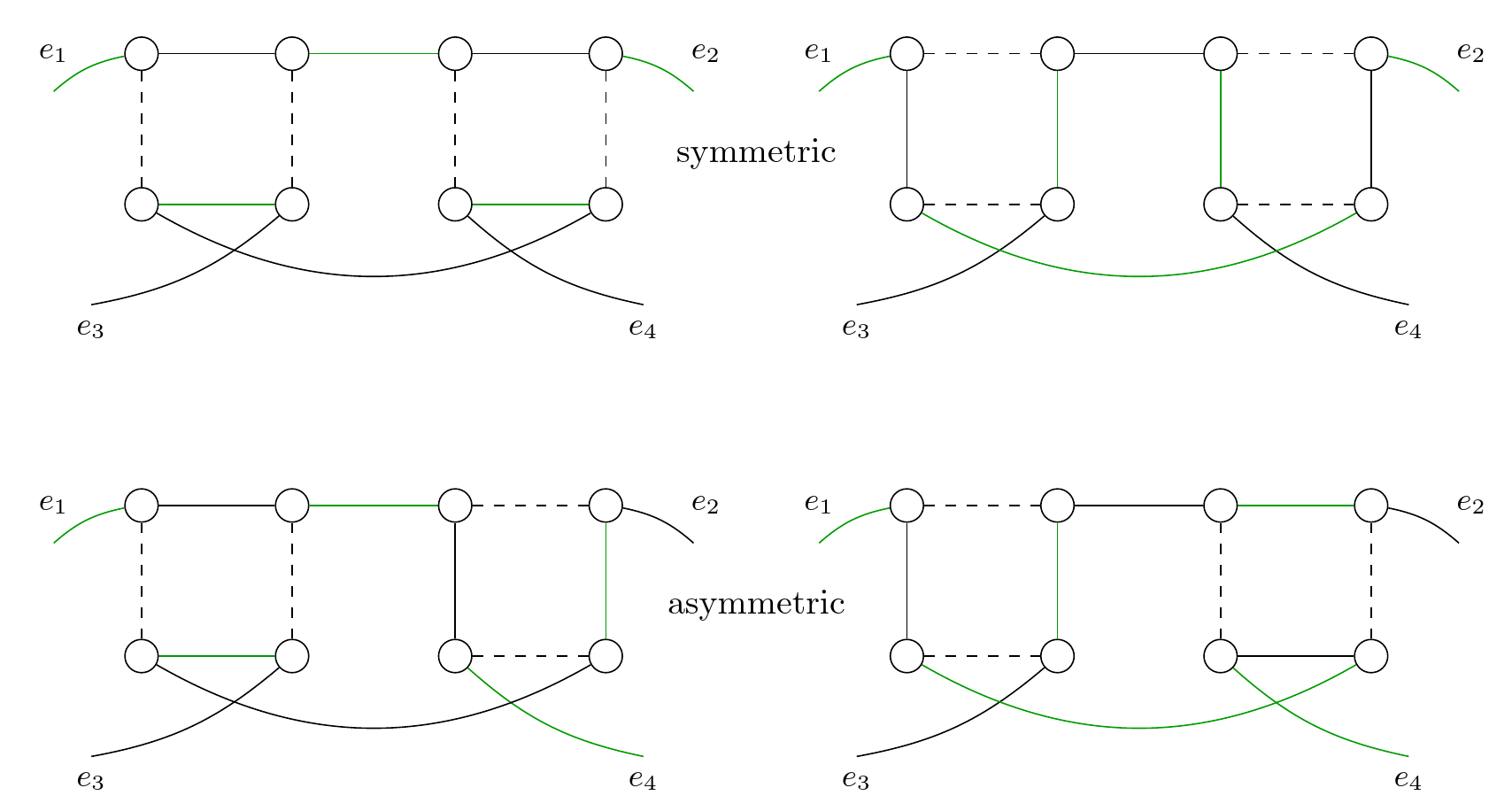}
\caption{2-chains when $M_1 \cap \cX = \emptyset$ and $|M_2 \cap \cX| =2$ (bold dashed edges belong to $M_1$ and 
highlighted to $M_2$).}
\label{figure remark1}
\end{figure}

\begin{remark}\label{rem2}
{\em Let $n\geq 2$. Consider a perfect matching $M_{1}$ of $\cCP_n$ such that $M_{1}$ does not intersect the principal $4$-edge-cut $\mathcal{X}$ of $\cCP_n$, that is, $M_{1}\cap \mathcal{X}=\emptyset$, and consider a $2$-chain of $\cCP_n$, say ${\cT}_{(j,j+1)}$ with $j=1,\dots,2n$ and $j \neq n,2n$.
Let $M_{2}$ be the perfect matching of $\cCP_n$ such that $|M_{2}\cap \mathcal{X}|=4$. Clearly $M_{1}\cap M_{2}=\emptyset$.\\ Notwithstanding whether $\cT_{(j,j+1)}$ is symmetric or asymmetric with respect to $M_{1}$, we have that $(M_{1}\cup M_{2})\cap E({T}_{(j,j+1)})$ induces two disjoint paths of equal length $($see Figure $\ref{figure remark2})$ whose union contains all the vertices of $\cT_{j}$ and $\cT_{j+1}$. Let $Q$ be one of these paths. We first note that $Q$ contains exactly one vertex from $\{u_{j},v_{j+1}\}$ and exactly one vertex from $\{u_{j+3},v_{j+2}\}$. If $\cT_{(j,j+1)}$ is symmetric with respect to $M_{1}$, then $Q$ contains $u_{j}$ if and only if $Q$ contains $u_{j+3}$. Otherwise, if $\cT_{(j,j+1)}$ is asymmetric with respect to $M_{1}$, then $Q$ contains $u_{j}$ if and only if $Q$ contains $v_{j+2}$.	
}
\end{remark}

\begin{figure}[h!]
\centering
\includegraphics[scale=0.25]{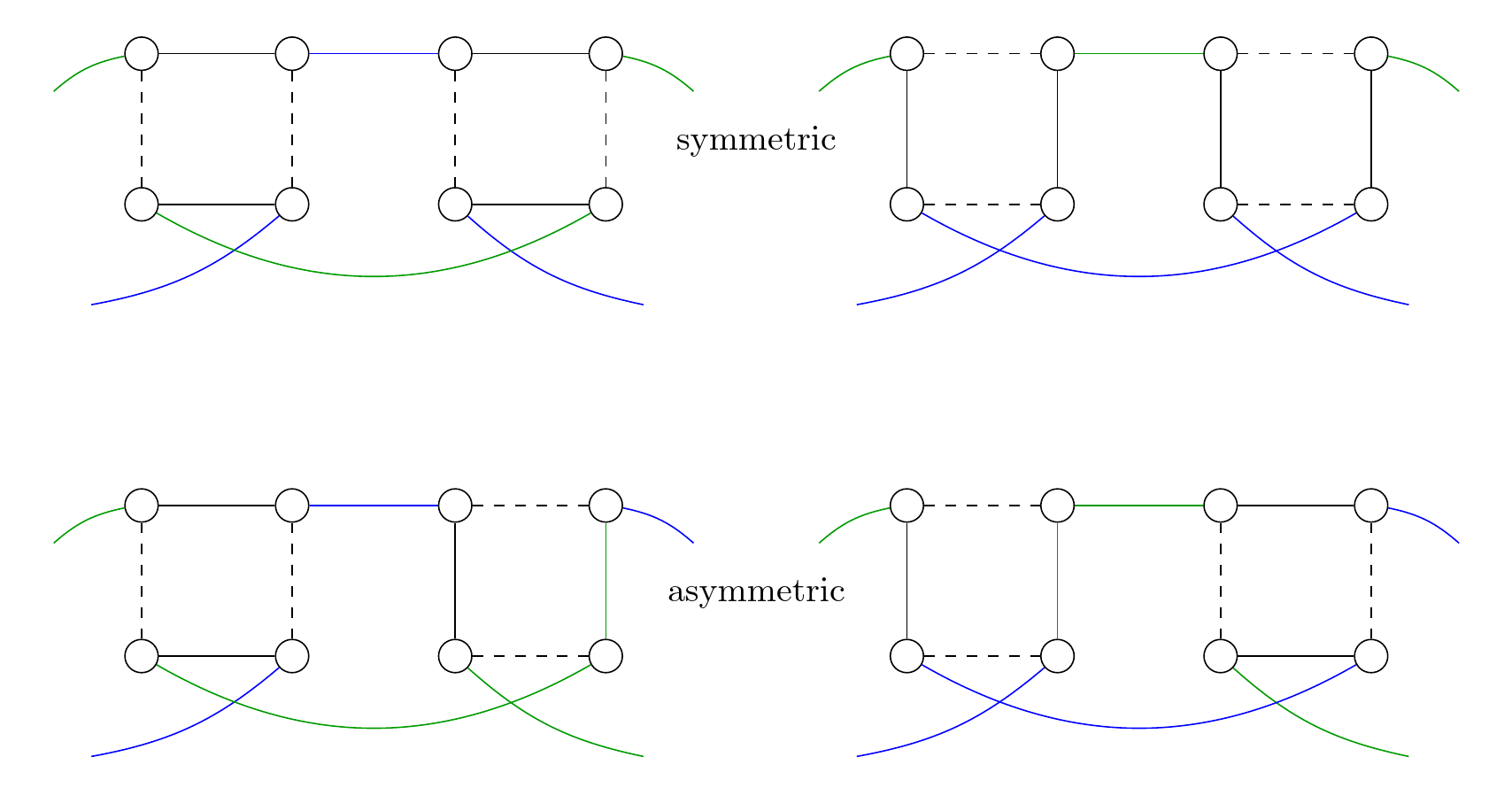}
\caption{2-chains when $M_1 \cap \cX = \emptyset$ and $|M_2 \cap \cX| =4$ (bold dashed edges belong to $M_1$ and highlighted to $M_2$).}
\label{figure remark2}
\end{figure}

\begin{theorem}\label{theorem pmh crossed prisma}
	Let $n$ be a positive even integer. Then, the $2n$-crossed prism graph $\cCP_n$ is PMH.
\end{theorem}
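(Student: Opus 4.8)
The plan is to fix an arbitrary perfect matching $M$ of $\cCP_n$ and to produce a disjoint perfect matching whose union with $M$ is a Hamiltonian cycle, arguing according to $|M\cap\cX|$, which is even because $\cX$ is a $4$-edge-cut and hence lies in $\{0,2,4\}$. When $|M\cap\cX|=2$ there is nothing to prove, since Lemma \ref{lemma complementary} already supplies the required complementary matching. The substance is therefore in the extreme cases $|M\cap\cX|=0$ and $|M\cap\cX|=4$, and the parity of $n$ enters through the following grouping. Because $n$ is even, each of the two $n$-chains of $\cCP_n$ contains an even number of $C_4$-poles, so the $2n$ poles split into $n$ consecutive $2$-chains $\cT_{(2k-1,2k)}$, $k=1,\dots,n$, none of which crosses the principal $4$-edge-cut (the forbidden indices $n,2n$ are even and so never equal an odd $2k-1$). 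This grouping is unavailable for odd $n$ and is the structural reason behind the failure recorded in the preceding proposition.

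Assume first that $|M\cap\cX|=0$. By Lemma \ref{lem1} and Corollary \ref{cor2}, $M$ contains no pole-joining edge, so on each pole $M$ chooses one of the two perfect matchings of its $C_4$. Let $s$ be the number of the $n$ $2$-chains that are asymmetric with respect to $M$. I will build a complementary Hamiltonian matching $M_2$ with $|M_2\cap\cX|=2$ when $s$ is even and $|M_2\cap\cX|=4$ when $s$ is odd. If $s$ is even, I select for each $2$-chain one of the two admissible pairs of boundary semiedges allowed by Remark \ref{rem1}; the matching condition on a shared semiedge propagates this choice from one $2$-chain to the next around the cyclic chain, and the loop closes consistently exactly because $s$ is even, after which the interior of $M_2$ is forced by the spanning paths of Remark \ref{rem1}. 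To see that $M\cup M_2$ is a single cycle, I form the auxiliary multigraph whose vertices are the $M_2$-edges joining consecutive $2$-chains and whose edges are the $2$-chains, each joining the two vertices on which its spanning path ends; since each of the $n$ gaps between consecutive $2$-chains carries exactly one such $M_2$-edge and every $2$-chain links its left gap to its right gap, this multigraph is a single $n$-cycle, so $M\cup M_2$ is connected.

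If instead $s$ is odd, I take $M_2$ to be the unique matching with $|M_2\cap\cX|=4$, which by Lemma \ref{lem1} is the set of all pole-joining edges, and I use Remark \ref{rem2}. Regarding the two families of pole-joining edges as two rails, a strand of $M\cup M_2$ continues along its rail through a symmetric $2$-chain and crosses to the other rail through an asymmetric one, so after traversing all $n$ $2$-chains the rails are interchanged by a product of $s$ transpositions; as $s$ is odd the two strands fuse into a single cycle. The case $|M\cap\cX|=4$ is the mirror of this: Lemma \ref{lem1} forces $M$ itself to be the all-pole-joining matching, so I instead choose a within-pole complementary matching $M_2$ (with $M_2\cap\cX=\emptyset$) for which exactly one $2$-chain is asymmetric with respect to $M_2$; the same rail/transposition count, now applied to $M_2$, shows that $M\cup M_2$ is a single Hamiltonian cycle.

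The step I expect to be the main obstacle is exactly this connectivity bookkeeping: ensuring that each candidate union is a single cycle rather than a disjoint $2$-factor. It rests on the quotient $n$-cycle identification when $|M_2\cap\cX|=2$ and on the rail/transposition-parity count when $|M_2\cap\cX|=4$, and the two constructions must interlock through the parity of $s$, so that every matching with $M\cap\cX=\emptyset$ is covered by precisely one of them. The points demanding care are the justification that the boundary choice propagates consistently (hence that $M_2$ exists when $s$ is even), that the forced interior edges really complete a perfect matching, and that the rail-crossing at asymmetric $2$-chains matches the twist of the inner cycle, as encoded in Remarks \ref{rem1} and \ref{rem2}.
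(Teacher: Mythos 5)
Your proposal is correct, and although it runs on the same chassis as the paper's proof --- the trichotomy on $|M\cap\cX|$, Lemma~\ref{lemma complementary} disposing of $|M\cap\cX|=2$, and the decomposition into $2$-chains with the symmetric/asymmetric dichotomy of Remarks~\ref{rem1} and~\ref{rem2} --- your organisation of the case $M\cap\cX=\emptyset$ is genuinely different, and in fact more robust. The paper splits that case by the \emph{separate} parities of the number of asymmetric $2$-chains in the right half $\mathcal{R}$ and the left half $\mathcal{L}$: Subcase~3.1 (both even) builds a complementary matching through $\{a,d\}$, and Subcase~3.2 (``one of them odd'') uses the all-joining-edges matching with $|M_2\cap\cX|=4$. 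You instead use the single global parity $s$ of asymmetric $2$-chains around the whole cyclic arrangement of all $n$ $2$-chains: rail-choice propagation giving $|M_2\cap\cX|=2$ when $s$ is even, and the rail-transposition count giving $|M_2\cap\cX|=4$ when $s$ is odd. This buys you something concrete: when $\mathcal{R}$ and $\mathcal{L}$ \emph{each} contain an odd number of asymmetric $2$-chains, $s$ is even, and your propagation automatically returns a matching meeting $\cX$ in a diagonal pair such as $\{a,c\}$ (consistent with Corollary~\ref{cor1}); the paper's Subcase~3.2, read as ``at least one odd'', would apply the all-joining-edges matching here, but its claimed endvertices for $L_1$ and $L_2$ tacitly assume $\mathcal{L}$ is even, and in this configuration the union is actually two disjoint cycles of length $4n$ rather than a Hamiltonian cycle --- already visible in $\cCP_2$ with spokes on poles $1$ and $3$ and parallel edges on poles $2$ and $4$ (the union contains the $8$-cycle $(u_1,v_1,v_4,v_3,v_6,u_6,u_7,u_8)$). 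So your global-parity bookkeeping covers a configuration that the paper's left/right subcase split either mishandles or omits. Your treatment of $|M\cap\cX|=4$ --- choosing an inside-the-poles $M_2$ with exactly one asymmetric $2$-chain and reusing the odd-parity criterion --- is likewise a clean substitute for the paper's explicitly exhibited cycle, and your auxiliary quotient-cycle and transposition arguments supply exactly the connectivity verification that the paper delegates to path concatenation.
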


\begin{proof}
	Let $M_{1}$ be a perfect matching of $\cCP_n$. We need to show that there exists a perfect matching $M_{2}$ of $\cCP_n$ such that $M_{1}\cup M_{2}$ induces a Hamiltonian cycle of $\cCP_n$. Three cases, depending on the intersection of $M_{1}$ with the principal 4-edge-cut $\mathcal{X}$ of $\cCP_n$, are considered.

\noindent\textbf{Case 1:} $|M_{1}\cap \mathcal{X}|=2$.

   By Lemma \ref{lemma complementary}, there exists a perfect matching $N$ of $\cCP_n$ such that $|N\cap \mathcal{X}|=2$ and $M_{1}\cap N=\emptyset$. Moreover, the complementary 2-factor of $N$ is a Hamiltonian cycle. Since $M_{1}$ is contained in the mentioned 2-factor, the result follows.\\
   
\noindent\textbf{Case 2:} $|M_{1}\cap \mathcal{X}|=4$.
   
	In this case, we can define $M_{2}$ to be the following perfect matching:

	\[
		M_{2}= \{u_{1}v_{1},u_{2}v_{2}\} \cup \bigcup\limits_{j=2}^{2n} \{u_{2j-1}u_{2j},v_{2j-1}v_{2j}\} 
	\]
	
	In fact, $M_{1}\cup M_{2}$ induces the following Hamiltonian cycle:
	
	\[	 
	(u_{1},v_{1},v_{4},v_3, \ldots ,v_{2n}, v_{2n-1},v_{2n+2},\ldots,v_{4n-3},v_{4n},v_{4n-1},v_{2}, u_{2}, u_{3},u_{4},\ldots, u_{4n}),
	\]
	where $v_{3}$ and $v_{2n+2}$ are respectively followed by $v_6$ and $v_{2n+1}$.\\
	  
\noindent\textbf{Case 3:} $|M_{1}\cap \mathcal{X}|=0$.
    
	Clearly, $|M_{2}\cap \mathcal{X}|$ cannot be zero, because, if so, choosing $M_{2}$ to be disjoint from $M_{1}$, $M_{1}\cup M_{2}$ induces $2n$ disjoint $4$-cycles. Therefore, $|M_{2}\cap \mathcal{X}|$  must be equal to 2 or 4. 
   Let $\mathcal{R}=\{\cT_{(1,2)},\ldots, \cT_{(n-1,n)}\}$ and  $\mathcal{L}=\{\cT_{(n+1,n+2)},\ldots, \cT_{(2n-1,2n)}\}$ be the sets of 2-chains within the left and right $n$-chains of $\cCP_n$ namely the right and left $n$-chains each split into $\frac{n}{2}$ $2$-chains. We consider two cases depending on the parity of the number of $2$-chains in $\mathcal{L}$ and $\mathcal{R}$ which are asymmetric with respect to $M_{1}$. Let the function $\Phi:\mathcal{\mathcal{R}}\cup \mathcal{L}\rightarrow\{-1,+1\}$ be defined on the $2$-chains $\cT\in\mathcal{R}\cup \mathcal{L}$ such that:
	  \[
	  \Phi(\mathcal{T}) =
	  \begin{cases}
	  	+1 & $if $\cT$ is symmetric with respect to $M_{1},\\
	  	-1 & $otherwise.$
	  \end{cases}
	  \]

	  \noindent\textbf{Subcase 3.1:} $\mathcal{L}$ and $\mathcal{R}$ each have an even number (possibly zero) of asymmetric $2$-chains with respect to $M_{1}$.
	  
	  We claim that there exists a perfect matching such that its union with $M_{1}$ gives a Hamiltonian cycle of $\cCP_n$. Since the number of asymmetric $2$-chains in $\mathcal{R}$ is even, $\prod\limits_{{\cT}\in\mathcal{R}}\Phi({\cT})=+1$, and consequently, by appropriately concatenating paths as in Remark \ref{rem1}, there exists a path $R$ with endvertices $u_{1}$ and $u_{2n}$ whose vertex set is $\bigcup\limits_{i=1}^{2n}\{u_{i},v_{i}\}$ such that it contains all the edges in $M_{1}\cap (\bigcup\limits_{i=1}^{n}E({\cT}_{i}) )$. We remark that this path intersects exactly one edge of $\{xy\in E(G): x\in V({\cT}_{j}), y\in V({\cT}_{j+1})\}$, for each $j=1,\dots, n-1$.  By a similar reasoning, since $\prod\limits_{{T}\in\mathcal{L}}\Phi({\cT})=+1$, there exists a path $L$ with endvertices $u_{2n+1}$ and $u_{4n}$ whose vertex set is $\bigcup\limits_{i=2n+1}^{4n}\{u_{i},v_{i}\}$, such that it contains all the edges in $M_{1}\cap (\bigcup\limits_{i=n+1}^{2n}E({\cT}_{i}) )$. Once again, this path intersects exactly one edge of $\{xy\in E(G): x\in V({\cT}_{j}), y\in V({\cT}_{j+1})\}$, for each $j =n+1,\ldots,2n-1$.  These two paths, together with the edges $a$ and $d$ form the required Hamiltonian cycle of $\cCP_n$ containing $M_{1}$, proving our claim. We remark that this shows that there exists a perfect matching $M_{2}$ of $\cCP_n$ such that $M_{2}\cap \mathcal{X}=\{a,d\}$, $M_{1}\cap M_{2}=\emptyset$ and with $M_{1}\cup M_{2}$ inducing a Hamiltonian cycle of $\cCP_n$. One can similarly show that there exists a perfect matching $M_{2}'$ of $\cCP_n$ such that $M_{2}'\cap \mathcal{X}=\{b,c\}$, $M_{1}\cap M_{2}'=\emptyset$ and with $M_{1}\cup M_{2}'$ inducing a Hamiltonian cycle of $\cCP_n$.\\

	  \noindent\textbf{Subcase 3.2:} One of $\mathcal{L}$ and $\mathcal{R}$ has an odd number of asymmetric $2$-chains with respect to $M_{1}$.
	  
	  Without loss of generality, assume that $\mathcal{R}$ has an odd number of asymmetric $2$-chains with respect to $M_{1}$, that is, $\prod\limits_{\cT\in\mathcal{R}}\Phi(\cT)=-1$. Let $M_{2}$ be the perfect matching of $\cCP_n$ such that $|M_{2}\cap \mathcal{X}|=4$. We claim that $M_{1}\cup M_{2}$ induces a Hamiltonian cycle of $\cCP_n$. Since $\prod\limits_{{\cT}\in\mathcal{R}}\Phi({\cT})=-1$, by appropriately concatenating paths as in Remark \ref{rem2} we can deduce that $M_{1}\cup M_{2}$ contains the edge set of two disjoint paths $R_{1}$ and $R_{2}$, such that:
	  \begin{enumerate}
	  	\item $|V(R_{1})|=|V(R_{2})|=2n$;
	  	\item $V(R_{1})\cup V(R_{2})=\bigcup\limits_{i=1}^{2n}\{u_{i},v_{i}\}$;
	  	\item the endvertices of $R_{1}$ are $u_{1}$ and $v_{2n-1}$; and
	  	\item the endvertices of $R_{2}$ are $v_{2}$ and $u_{2n}$.
	  \end{enumerate}
	 We shall be using the fact that
	  $\{u_{1}u_{4n}, v_{2}v_{4n-1}, v_{2n-1}v_{2n+2}, u_{2n}u_{2n+1} \}=\{a,b,c,d\} = \mathcal{X} \subset M_2$.\\
	  
By Remark \ref{rem2} we can deduce that $M_{1}\cup M_{2}$ contains the edge set of two disjoint paths $L_{1}$ and $L_{2}$, such that:
\begin{enumerate}
	\item $|V(L_{1})|=|V(L_{2})|=2n$;
	\item $V(L_{1})\cup V(L_{2})=\bigcup\limits_{i=2n+1}^{4n}\{u_{i},v_{i}\}$;
	\item the endvertices of $L_{1}$ are $v_{2n+2}$ and $v_{4n-1}$; and 
	\item the endvertices of $L_{2}$ are $u_{2n+1}$ and $u_{4n}$.
\end{enumerate}
The concatenation of the following paths and edges gives a Hamiltonian cycle of $\cCP_n$ containing $M_{1}$:
\begin{linenomath}
	$$R_{1} v_{2n-1}v_{2n+2} L_{1} v_{4n-1}v_{2} R_{2} u_{2n}u_{2n+1} L_{2} u_{4n}u_{1}.$$
\end{linenomath}
This completes the proof.
\end{proof}

\section{Conclusions}
In this brief note, we have studied the $PMH$-property in two well-known families of cubic graphs, the Prisms and the Crossed Prisms. As far as the authors know, no cubic $PMH$ graphs with girth at least 8 have been found, and we believe that the technique of appropriately modifying the edge set of a graph to make it $PMH$ could be useful in this direction.

\end{document}